\newtheorem{theorem}{Theorem}[section]
\newtheorem{lemma}[theorem]{Lemma}
\newtheorem{corollary}[theorem]{Corollary}
\theoremstyle{definition}
\newtheorem{de}[theorem]{Definition}
\newtheorem{ex}[theorem]{Example}
\newtheorem{re}[theorem]{Remark}
\newtheorem{konst}[theorem]{Construction}
\newtheorem{definition}[theorem]{Definition}
\def\e{\begin{equation}}
	\def\ee{\end{equation}}
\def\1{\frac{1}{2}}
\def\bP{\mathbb{CP}^1}
\def\oMn{\overline{{\mathcal M}_{0,n}^{\mathbb R}}}
\def\R{{\mathbb R}}
\def\z{\zeta}
\begin{document}

	\title{Belyi pair for the orientation cover  of~$  \overline{{\mathcal M}_{0,5}^{\mathbb R}}$ }
	\author{N. Ya.  Amburg, E. M. Kreines}
	\date{}
	
	\maketitle
	
	\hfill To G.B. Shabat with admiration on the occasion of his 70th birthday.
	
	\begin{abstract}
		Let $\overline{{\mathcal M}_{0,5}^{\mathbb R}}$ be the Deligne-Mumford compactification of the moduli space of genus 0 real algebraic curves with 5 marked points. By ${\mathcal L}(\overline{{\mathcal M}_{0,5}^{\mathbb R}})$ we denote its orientation cover. %Then $\dim {\mathcal L}(\overline{{\mathcal M}_{0,5}^{\mathbb R}})=\dim  \overline{{\mathcal M}_{0,5}^{\mathbb R}}=2$, i.e., ${\mathcal L}(\overline{{\mathcal M}_{0,5}^{\mathbb R}})$ is an oriented surface. 
		The  cell decomposition of  ${\mathcal L}(\overline{{\mathcal M}_{0,5}^{\mathbb R}})$ is a dessin d'enfant of the genus 4.  In this paper we compute the Belyi pair for this dessin. In particular, it appears that the corresponding curve is the celebrated Bring curve.
		
		{\bf Key words}: dessins d'enfants, Belyi functions, Bring curve, moduli space of  real algebraic curves
	\end{abstract}
	
	\tableofcontents

	\section{Introduction}

	Let $\overline{{\mathcal M}_{0,n}^{{\mathbb R}}}$  be the Deligne-Mumford compactification of the moduli space
	of real algebraic curves of genus 0 with $n$ marked and numbered  points. The variety $\overline{{\mathcal M}_{0,n}^{{\mathbb R}}}$ was  investigated, see, for example, \cite{Dev,EtingofCo,Kap}. In particular,  in \cite{AmbKre,Ceyhan} the first  Stiefel-Whitney  class for   $\overline{{\mathcal M}_{0,n}^{\mathbb R}}$ was computed and a geometric description of the dual class was obtained.

	There exists a natural cellular decomposition of $\overline{{\mathcal M}_{0,n}^{\mathbb R}}$. %Since $\dim(\overline{{\mathcal M}_{0,5}^{\mathbb R}})=2$ it is interesting to investigate  geometric and combinatorial structure of this cell decomposition from different points of view. One of the ways to investigate this structure is to investigate it as an embedded graph. 
	Orientation cover of $\overline{{\mathcal M}_{0,5}^{\mathbb R}}$ is a compact smooth oriented surface of the genus 4, so the cellular decomposition provides a dessin d'enfant, i.e., a  connected embedded graph of a certain special structure, see Definition~\ref{DefDD}.  
	The theory of dessins d'enfants was initiated by A.~Grothendieck
	in~\cite{G} and actively developed thereafter,
	see~\cite{LZ} and references therein. 
	Dessins d'enfants are naturally related to so-called Belyi pairs, i.e., non-constant meromorphic functions with at most 3 critical values defined on algebraic curves, see Definition~\ref{DefBP}. These relations provide plenty of non-trivial applications in algebra, geometry, mathematical physics, etc. It is a difficult important problem to compute precisely the Belyi pair corresponding to a given dessin. There is just a small amount of computed examples, especially in positive genuses.
	In this paper we compute the Belyi function of the genus 4 dessin d'enfant given by the natural  cell decomposition of the orientation cover of $\overline{{\mathcal M}_{0,5}^{\mathbb R}}$. 
	
	Bring curve is known since 1786, see \cite{Bring}, and was widely explored further, see \cite{Zv} and references therein. While reading the paper \cite{Zv} we observed that the permutation group $S_5$ acts on a certain dessin d'enfant of genus 4, 
	we wandered  if there are two different dessins of genus 4 with such large symmetry group. We are glad that we managed to connect Bring curve with the dessin obtained from the natural cellular decomposition of  ${\mathcal L}(\overline{{\mathcal M}_{0,5}^{\mathbb R}})$, and prove that this dessin is dual to $I_4\cup I_4^*$ introduced in \cite{Zv}, see also Definition \ref{defI_4} in this text. 
	
	Our work is organized as follows: Section 2 contains necessary definitions and general description of the cell decomposition of the variety   $\overline{{\mathcal M}_{0,n}^{\mathbb R}}$. In Section 3 we provide  the details concerning the cell decomposition ${\mathcal D}$ of  the orientation cover  ${\mathcal L}(\overline{{\mathcal M}_{0,5}^{\mathbb R}})$ of $\overline{{\mathcal M}_{0,5}^{\mathbb R}}$, which includes the number of cells, adjacency types, and graphic illustration of the adjacency. It is shown that ${\mathcal D}$ is a dessin d'enfant, its genus and combinatorial type are evaluated. In  Section 4 we introduce the genus 4 icosahedron, discuss its symmetries and Belyi functions on the Bring curve. Section 5 contains the final formula of the Belyi function corresponding to the dessin d'enfant arised from this cell decomposition of the orientation cover~${\mathcal L}(\overline{{\mathcal M}_{0,5}^{\mathbb R}})$.
	
	\section{Deligne-Mumford compactification of    ${\mathcal M}_{0,n}^{\mathbb R}$} 
	
	In this and the next sections we basically follow definitions and notations from our paper \cite{AmbKre}. We provide here the detailed description of a well-known construction of real curve moduli space cell decomposition   for the sake of completeness and convenience of a reader.
	
	Let  $n\ge 3$. The moduli space of genus 0 real algebraic curves with $n$ marked and numbered points, i.e., the real algebraic variety parametrizing the isomorphism classes of these curves, is denoted by ${\mathcal M}_{0,n}^{\mathbb R}$. 
	
	\subsection{Stable curves}
	
	Following the paper \cite{EtingofCo}, we represent real stable curves as ``cacti-like'' structures, consisting of   circles with the points $\{1,2,\ldots,n\}$ on them:
	
	\begin{definition} \label{SC} \cite{DM}
		A {\em stable curve\/} of genus 0 with $n$ marked and numbered points over the field of real numbers  $\R$ is a finite union of real projective lines 
		$C=C_1\cup C_2\cup \ldots\cup C_p$  with $n$ different marked points  $z_1,z_2,\ldots,z_n\in C$, if the following conditions hold.
		\begin{enumerate}
			\item For each point  $z_i$ there exists the unique line  $C_j$, such that $z_i\in C_j$.
			\item For any pair of lines $C_i\cap C_j$ is either empty or consists of one point, and in the latter
			case the intersection is transversal.
			\item \label{it3} The graph corresponding to  $C$ (the lines $C_1,C_2,\ldots,C_p$ correspond to the vertices; two vertices are incident to the same edge iff corresponding lines have non-empty intersection) is a tree.   
			\item  \label{it4} The total number of special points (i.e. marked points or intersection
			points) that belong to a given $C_j$ is at least 3 for each $j=1,\ldots,p$.
		\end{enumerate}
		We say that  $p$ is the  {\em number of components\/} of the stable curve, and $C_j$ is a {\em component\/}.
	\end{definition}
	\begin{figure}[h]
		\centering
		\includegraphics[width=.35\linewidth]{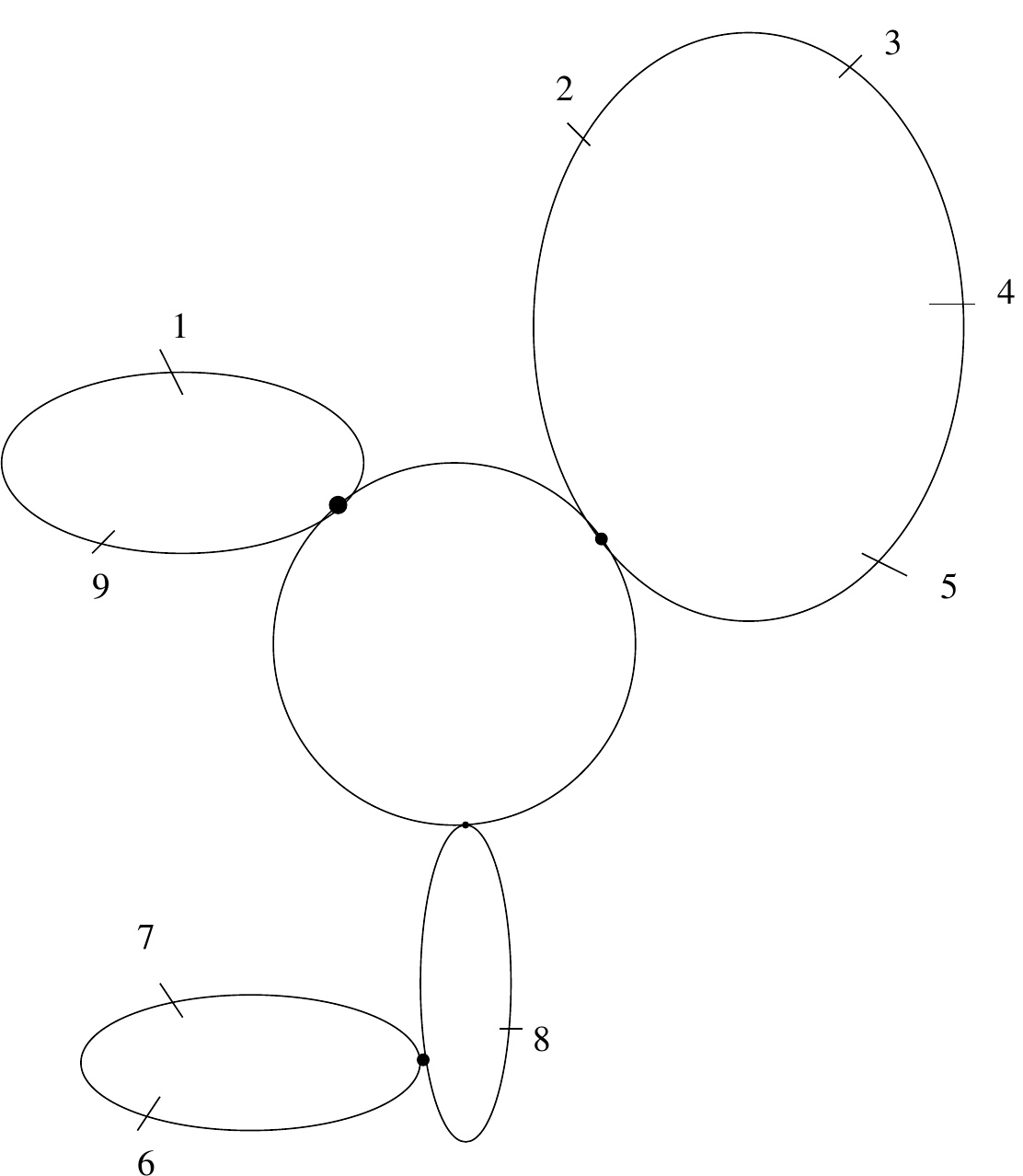}
		\caption{A stable curve over  $\mathbb R$ of genus  $0$ with $9$ marked points}
		\label{exstcurve1}
	\end{figure}
	\begin{definition}
		Let  $C=(C_1,C_2,\ldots,C_p,z_1,z_2,\ldots,z_n)$ and $C'=(C'_1,C'_2,\ldots,C'_p,z'_1,z'_2,\ldots,z'_n)$ be real stable curves of genus 0 with $n$ marked and numbered points. $C$ and $C'$ are called {\em equivalent\/} if there exists an isomorphism of algebraic curves $f:C\to C'$ such that   $f(z_i)=z'_i$ for all $i=1,\ldots , n$.
	\end{definition}
	%Заметим, что группа перестановок точек действует независимо на каждой из окружностей.
	
	\subsection{Moduli space $\overline{{\mathcal M}_{0,n}^{\mathbb R}}$}
	
	\begin{definition}
		Let  $n\ge 3$. {\em Deligne-Mumford compactification\/} $\overline{{\mathcal M}_{0,n}^{\mathbb R}}$   of the moduli space of genus 0 real algebraic curves with $n$ marked and numbered points 
		is the set of equivalence classes of the genus 0 stable curves with $n$ marked and numbered points defined over ${\mathbb R}$.
	\end{definition}
	\begin{theorem}\cite{Dev} For all $n\ge 3$ the set $\oMn$ is a real compact variety of   dimension  $\dim(\oMn)=n-3$ and is the closure of ${\mathcal M}_{0,n}^{\mathbb R}$.
		If $n>4$, then  $\oMn$ is non-orientable.
	\end{theorem}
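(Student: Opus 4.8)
\emph{Sketch of the argument.} First, the dimension and the structure of the open part. A smooth genus~$0$ real curve with $n$ numbered points is a tuple $(\mathbb{P}^1_{\mathbb R};z_1,\dots,z_n)$ with $z_i\in\mathbb{RP}^1$ pairwise distinct, and for $n\ge 3$ the group $\mathrm{PGL}_2(\R)$ of real projectivities acts freely on such tuples, since a projectivity fixing three points is the identity. Normalising $z_1=0$, $z_2=1$, $z_3=\infty$ identifies $\Mn$ with the complement in $\R^{n-3}=\{(z_4,\dots,z_n)\}$ of the arrangement $\bigcup_i\{z_i=0\}\cup\bigcup_i\{z_i=1\}\cup\bigcup_{i<j}\{z_i=z_j\}$; this is a smooth affine real variety of dimension $n-3$, whose connected components are the $(n-1)!/2$ open cells indexed by the cyclic orders of the $n$ points on the unoriented circle.

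Next, that $\oMn$ is a compact real variety and the closure of $\Mn$. The Deligne--Mumford--Knudsen construction of $\overline{\M_{0,n}}$ is defined over $\mathbb Z$, hence over $\R$. One produces $\overline{\M_{0,n+1}^{\R}}$ inductively as the universal curve over $\oMn$, or, following Keel, realises $\oMn$ as an iterated blow-up of $(\mathbb{P}^1_{\mathbb R})^{n-3}$ along a transverse system of smooth real centres; either way $\oMn$ is a smooth projective, hence compact, real variety of dimension $n-3$, stratified by locally closed strata indexed by stable $n$-trees $T$, the $T$-stratum being a product of smaller open moduli spaces whose real points are exactly the equivalence classes of stable curves of combinatorial type $T$ in the sense of Definition~\ref{SC}. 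Density of $\Mn$ — so that $\oMn$ really is its closure — holds because each node of a real stable curve can be opened by a real smoothing parameter, producing a real deformation that leaves every boundary stratum; compactness then forces the closure of $\Mn$ to exhaust $\oMn$.

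The substantive point is non-orientability for $n>4$, which I would establish through the natural tiling of $\oMn$ by $(n-1)!/2$ copies of the $(n-3)$-dimensional Stasheff associahedron, namely the closures of the cells above, whose codimension-one faces are the one-nodal boundary strata; orientability amounts to a coherent choice of orientations of all tiles compatible across every gluing, and the plan is to show this is impossible by exhibiting a closed path crossing the boundary facets with a net twist. For $n=5$ this is immediate: $\oMf$ is a closed surface tiled by $(5-1)!/2=12$ pentagons, with $15$ vertices (the trivalent trees with $5$ labelled leaves) and $30$ one-cells (the $\binom{5}{2}=10$ one-nodal strata, each a copy of $\mathbb{RP}^1$ cut into three arcs), so $\chi(\oMf)=15-30+12=-3$; an odd Euler characteristic is impossible for a closed orientable surface, so $\oMf$ is non-orientable, consistently with its orientation double cover having $\chi=-6$ and genus~$4$ as in the Introduction. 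The same effect is visible through Keel: $\oMf$ is the torus $(\mathbb{RP}^1)^2$ blown up at three real points, a real blow-up of a point on a surface glues in a M\"obius band, and more generally a real blow-up along a smooth centre of \emph{even} codimension adds the class of its exceptional divisor to the first Stiefel--Whitney class $w_1$; for $n\ge 5$ Keel's list of centres does contain such loci, for instance the codimension-two diagonals $\{z_i=z_j=a\}$ with $a\in\{0,1,\infty\}$.

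The main obstacle is completing this last step for general $n>5$: one must show that the twist does \emph{not} cancel, that is, that $w_1(\oMn)\neq 0$ rather than merely possibly nonzero. In the tiling picture this requires enough control of the gluing of the associahedral tiles (the mosaic/operadic combinatorics) to produce an explicit orientation-reversing loop, with the remaining marked points carried along as spectators near a boundary stratum modelled on $\oMf$; in the blow-up picture it requires checking that the classes of the exceptional divisors over the even-codimension centres pair nontrivially with a suitable small exceptional cycle, so that their sum is nonzero in $H^1(\oMn;\mathbb Z/2)$. Everything else — the dimension count, the variety and compactness assertions, and the density of $\Mn$ — is routine once the Deligne--Mumford--Knudsen and Keel constructions are in hand over $\R$.
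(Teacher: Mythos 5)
The paper does not actually prove this theorem --- it is quoted from \cite{Dev} --- so your sketch can only be judged on its own terms. Most of it holds up: the identification of $\Mn$ with a hyperplane-arrangement complement in $\R^{n-3}$, the dimension count, the $(n-1)!/2$ components indexed by dihedral classes of cyclic orders, compactness via the Deligne--Mumford--Knudsen (or Keel) construction over $\R$, and density of $\Mn$ by opening nodes with real smoothing parameters are all standard and correct. Your $n=5$ non-orientability argument is also complete: the counts $15-30+12=-3$ agree with Lemma~\ref{LemNum}, and an odd Euler characteristic is impossible for a closed orientable surface; since $n=5$ is the only case the rest of the paper uses, this already covers what matters here.

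The genuine gap is the one you name yourself: non-orientability for $n>5$ is asserted but not proved. Neither of your two strategies is carried to completion --- you do not exhibit the orientation-reversing loop through the associahedral tiling, and you do not verify that the exceptional classes of the even-codimension centres in Keel's construction sum to a nonzero element of $H^1(\oMn;\mathbb{Z}/2)$ --- and the parity-of-$\chi$ trick that rescues $n=5$ has no analogue in higher dimensions ($\chi$ vanishes identically in odd dimensions and says nothing about orientability in even ones). This is not a small omission: ``$w_1$ could be nonzero'' and ``$w_1\neq 0$'' are exactly the difference between a heuristic and a proof, as you concede. To close it you would either have to reproduce the twist/mosaic argument of \cite{Dev} and exhibit a concrete orientation-reversing circuit of tiles, or invoke the explicit computation of $w_1(\oMn)$ (shown to be nonzero for all $n\ge 5$) carried out in \cite{AmbKre,Ceyhan}, which the present paper cites for precisely this purpose.
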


	\subsection{Cell decomposition of  $\overline{{\mathcal M}_{0,n}^{\mathbb R}}$}
	\begin{re}
		There exists a natural structure of  cell decomposition for the space $\overline{{\mathcal M}_{0,n}^{\mathbb R}}$. This structure is described for example in the works \cite{Dev,Kap}. 
	\end{re}
	
	Following \cite{Dev}, we denote by ${\mathcal G}^L(n,k)$ the set of $n$-gons with the labels  $ 1,2,\ldots,n $ on the edges and with $k$ non-intersecting diagonals.
	
	\begin{de}
		A {\em twist\/} of $M \in {\mathcal G}^L(n,k)$ along a diagonal $d$ is the $n$-gon $M' \in {\mathcal G}^L(n,k)$, obtained  by the following procedure. At first we  cut $M$ along  $d$.  Then one (any one) of the parts is   rotated  around the axis, which is orthogonal to $d$ in the plane of the $n$-gon, by  $180^\circ$. Finally, we glue two parts along~$d$. 
	\end{de}
	\begin{re}
		Let $M'$ be the twist of $M$, let the labels $ 1,2,\ldots,n $ of the sides of  $M$ be ordered as  $\z_1,\ldots,\z_n$, and let the sides marked by $\z_1,\ldots,\z_k$ be separated by $d$ from the sides marked by $\z_{k+1},\ldots, \z_n$. Then the sides of $M'$ have ordered labels $\z_1,\ldots,\z_k,\z_{n}, \z_{n-1},\ldots, \z_{k+1}$.  
	\end{re}

	\begin{konst} \cite{Dev,Kap}  \label{l:oMn_cell_dec}
		{\bf Description of the cell decomposition of~$\overline{{\mathcal M}_{0,n}^{\mathbb R}}$. }
		We label different cells of the moduli space $\overline{{\mathcal M}_{0,n}^{\mathbb R}}$  by  right $n$-gons with marked sides and, possibly,  several non-intersecting diagonals. The  sides  of these $n$-gons correspond to the marked points on a curve and  are also marked by  $ 1,\ldots, n$.   The cells of the maximal dimension are labeled by $n$-gons without diagonals. The cells of codimension 1 are labeled by $n$-gons with one diagonal. Note that these cells consist exactly of 2-component stable curves. The cells of codimension 2, i.e., that correspond to 3-component curves, are labeled by  $n$-gons with 2 diagonals. In general, a cell of codimension $k$ is labeled by an  $n$-gon $M$ with $k$ diagonals. These diagonals divide $M$ into $k+1$ polygons $M_1,\ldots, M_{k+1}$. The edges of $M_1,\ldots, M_{k+1}$, which are the edges of $M$, are labeled by the points marking the  stable curve. Note that the condition  \ref{it3} of Definition \ref{SC} guarantees that different diagonals do not intersect outside the vertices of  $M$. The condition \ref{it4} of Definition \ref{SC} guarantees that each of $M_1,\ldots, M_{k+1}$ has at least 3 sides, i.e., it is a polygon.
		%\begin{konst} \cite{Dev,Kap}  \label{l:oMn_cell_dec1}
		%{\bf  Description of the cell decomposition of the compactified space $\overline{{\mathcal M}_{0,n}^{\mathbb R}}$. Prolongation. }
		Polygons $M$ and $M'\in {\mathcal G}^L(n,k)$ mark the same cell of the moduli space if $M$ can be transformed to $M'$ by series of twists and the dihedral group of $n$-gon actions.  
	\end{konst}
	\begin{re}
		Special charm of this construction is that marked points and singular points (the points of intersection of different components of a curve) do not have principal differences. Namely, both of them are denoted by edges of a polygon. Also, each component,    or  a connected union of several components, of the curve is denoted by a polygon. This polygon marks the cell of the cell decomposition for the moduli space of a possibly smaller dimension.
	\end{re}
	\begin{ex} \label{ex1}
		Cell decomposition of $\overline{{\mathcal M}_{0,n}^{\mathbb R}}$ contains $\frac{(n-1)!}{2}$ cells of the maximal dimension $n-3$.
	\end{ex}
	\begin{ex}
		$\overline{{\mathcal M}_{0,4}^{\mathbb R}}$ is a circle consisting of 3 cells of the dimension  $1$ and 3 cells of the dimension~$0$.
		Figure \ref{M_0,4} represents the cell decomposition of $\overline{{\mathcal M}_{0,4}^{\mathbb R}}$. Nearby each cell we provide its ``typical'' representative, i.e., one of the stable curves which constitute this cell.
		\begin{figure}[h]
			\centering
			\epsfig{figure=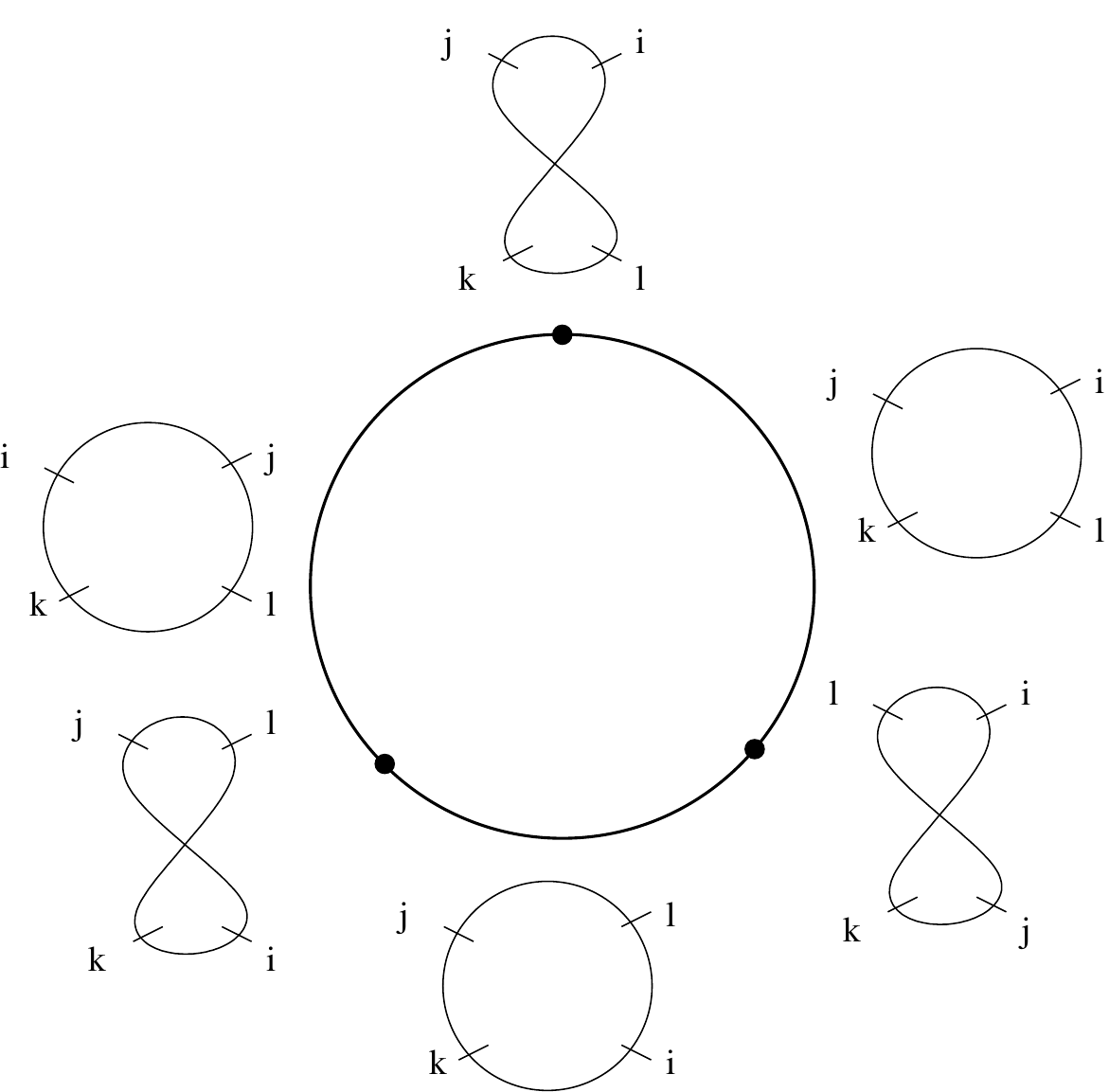,width=.35\linewidth}
			\caption{$\overline{{\mathcal M}_{0,4}^{\mathbb R}}$.}
			\label{M_0,4}
		\end{figure}
	\end{ex}

	\subsection{Orientation cover of  $\overline{{\mathcal M}_{0,n}^{\mathbb R}}$}

	\begin{de} \label{dOC}
		Let ${M}$ be a connected non-orientable manifold. {\em Orientation covering} of ${M}$ is a 2-fold covering $\rho: {\mathcal L}(M) \to {M}$ such that ${\mathcal L(M)}$ is connected and orientable. If $\rho: {\mathcal L}(M) \to {M}$ is an orientation covering, we say that ${\mathcal L(M)}$ is the {\em orientation cover} of~${M}$.
	\end{de}
	
	Correctness of this definition and existence of an orientation cover for any non-orientable manifold are proved for example in~\cite[Proposition 2.2]{Kreck}, unicity is given by \cite[Proposition 2.1]{Kreck}. If $\rho: {\mathcal L(M)} \to {M}$   is the orientation covering, then there exists an orientation converting involution $\pi: {\mathcal L(M)} \to {\mathcal L(M)}$, which permutes the folds, and $\rho \circ \pi =  \rho.$ 
	The detailed and complete description   of the orientation cover for $\overline{{\mathcal M}_{0,n}^{\mathbb R}}$ for all $n\ge 5$ is provided in~\cite[Theorem~6]{DevMor} and in~\cite{AmbKre}. Here we just point out that it inherits the cell decomposition described in the previous section, and each cell of the maximal dimension of $\overline{{\mathcal M}_{0,n}^{\mathbb R}}$ is covered by  two identical cells of ${\mathcal L}(\overline{{\mathcal M}_{0,n}^{\mathbb R}})$ with the opposite orientation.  %Roughly speaking each cell of the maximal dimension described in Construction \ref{l:oMn_cell_dec} is covered by itself and by the same cell with the opposite orientation. %All clockwise oriented cells are glued altogether according to the rules described in Construction \ref{l:oMn_cell_dec}.  All counter clockwise oriented cells are glued altogether according to the rules described in Construction \ref{l:oMn_cell_dec}. 

	% To add about the action on M_0,n; Devadoss's action and words that we have another action.
	
	It is well-known that the group $S_n$ acts faithfully and smoothly on $\overline{{\mathcal M}_{0,n}^{\mathbb R}}$ by the permutation of the marked points. This action can be naturally lifted to ${\mathcal L}(\overline{{\mathcal M}_{0,n}^{\mathbb R}})$:
	
	\begin{lemma}
		\label{SnD}
		\cite[Theorem~6]{DevMor}
		The symmetric group $S_n$  acts faithfully and smoothly on the orientable cover ${\mathcal L}(\overline{{\mathcal M}_{0,n}^{\mathbb R}})$ by permuting labels.
	\end{lemma}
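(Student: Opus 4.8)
The plan is to use the concrete realization of the orientation cover as the manifold of \emph{local orientations}. For a connected non-orientable manifold $M$ one may take
\[
{\mathcal L}(M)=\bigl\{(x,o_x): x\in M,\ o_x\ \text{a local orientation of}\ T_xM\bigr\},
\]
with $\rho(x,o_x)=x$; this is a connected orientable double cover of $M$, hence by the uniqueness in \cite[Proposition~2.1]{Kreck} it is the orientation cover of Definition~\ref{dOC}, and the deck involution $\pi$ carries $(x,o_x)$ to $(x,-o_x)$. The merit of this model is that a diffeomorphism of $M$ lifts to it \emph{canonically}, which turns the statement into a routine verification.

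First I would record the permitted input: $S_n$ acts faithfully and smoothly on $\oMn$ by permuting marked points, so there is a faithful homomorphism $S_n\hookrightarrow\mathrm{Diff}(\oMn)$, $\sigma\mapsto g_\sigma$ (this is the fact stated immediately before the Lemma). For each $\sigma$ I would then define the lift
\[
\tilde g_\sigma\colon{\mathcal L}(\oMn)\to{\mathcal L}(\oMn),\qquad \tilde g_\sigma(x,o_x)=\bigl(g_\sigma(x),\ \bigl((dg_\sigma)_x\bigr)_{*}o_x\bigr),
\]
pushing the local orientation forward along the linear isomorphism $(dg_\sigma)_x\colon T_xM\to T_{g_\sigma(x)}M$, and check that $\tilde g_\sigma$ is well defined and smooth and satisfies $\rho\circ\tilde g_\sigma=g_\sigma\circ\rho$, so that it genuinely covers $g_\sigma$.

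Next I would verify that $\sigma\mapsto\tilde g_\sigma$ is a group homomorphism into $\mathrm{Diff}({\mathcal L}(\oMn))$: $\tilde g_{\mathrm{id}}=\mathrm{id}$ is clear, and $\widetilde{g_\sigma g_\tau}=\tilde g_\sigma\tilde g_\tau$ follows from the chain rule $d(g_\sigma g_\tau)_x=(dg_\sigma)_{g_\tau(x)}\circ(dg_\tau)_x$ together with the functoriality of pushforward of local orientations; in particular each $\tilde g_\sigma$ is a diffeomorphism (with inverse $\tilde g_{\sigma^{-1}}$), it commutes with the deck involution $\pi$, and it preserves the canonical orientation of ${\mathcal L}(\oMn)$. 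Since $S_n$ is finite, smoothness of the action map $S_n\times{\mathcal L}(\oMn)\to{\mathcal L}(\oMn)$ amounts to smoothness of the individual $\tilde g_\sigma$, already checked. Finally, faithfulness is immediate: if $\tilde g_\sigma=\mathrm{id}$ then $g_\sigma\circ\rho=\rho\circ\tilde g_\sigma=\rho$, and surjectivity of $\rho$ forces $g_\sigma=\mathrm{id}_{\oMn}$, hence $\sigma=e$ since the action downstairs is faithful.

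I do not expect a deep obstacle here: the content is soft functoriality of the orientation cover, and faithfulness has been reduced to the known faithfulness on $\oMn$. The one point deserving care is exactly the homomorphism property. In pure covering-space language every $g_\sigma$ has two lifts, $\tilde g_\sigma$ and $\pi\circ\tilde g_\sigma$, and an incautious choice would only produce a projective representation; the local-orientation model removes the ambiguity by singling out the lift that pushes orientations forward. (An alternative, model-free argument: $g_\sigma$ induces an automorphism of $\pi_1(\oMn)$ preserving the orientation character $w_1\colon\pi_1(\oMn)\to\mathbb{Z}/2$---a diffeomorphism sends orientation-reversing loops to orientation-reversing loops---hence preserving $\ker w_1$, hence lifting to the associated double cover; fixing base points over a base point of $\oMn$ then selects a unique lift and yields the homomorphism.) In either presentation the cellular decomposition of $\oMn$ pulls back to the induced decomposition of ${\mathcal L}(\oMn)$, which is precisely what is used for the dessin-d'enfant analysis of the next section.
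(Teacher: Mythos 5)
The paper does not prove this lemma at all: it is quoted verbatim from \cite[Theorem~6]{DevMor}, so there is no internal argument to compare against. Your self-contained proof via the tautological model ${\mathcal L}(M)=\{(x,o_x)\}$ is sound: the pushforward lift $\tilde g_\sigma$ is well defined, smooth, functorial by the chain rule (so the lift is an honest representation rather than a projective one, and you are right that this is the only delicate point), and faithfulness descends from faithfulness on $\oMn$ via $\rho\circ\tilde g_\sigma=g_\sigma\circ\rho$. What your route buys is independence from the external reference and, as you note, functoriality of the orientation cover. The one substantive caveat is that your canonical lift \emph{always} preserves the tautological orientation of ${\mathcal L}(\oMn)$ (since $d\tilde g_\sigma$ carries the pullback of $o_x$ to the pullback of $(dg_\sigma)_*o_x$, which is by definition the canonical orientation at the image point). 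Hence the action you construct is not the ``permuting labels'' action of the cited theorem --- under which, as the paper remarks immediately after the lemma, some elements of $S_n$ reverse orientation --- but rather already the orientation-preserving action that the paper only obtains afterwards in Lemma~\ref{Sn} by twisting with $\pi$; the two actions differ by composition with $\pi$ exactly on the orientation-reversing permutations. This does not damage the paper's logic (Lemma~\ref{Sn} only needs \emph{some} faithful smooth lift commuting with $\pi$, and your action even renders its proof unnecessary), but if one insists on the specific combinatorial action on labeled cells that is used to identify the dessin, one still needs the construction of \cite{DevMor} or an explicit check that the label-permuting choice of lifts is multiplicative.
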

	
	Under this action some elements of $S_n$ preserve the orientation, and some elements convert the orientation. Below we introduce another action of $S_n$ on ${\mathcal L}(\overline{{\mathcal M}_{0,n}^{\mathbb R}})$ which preserves the orientation.

	\begin{lemma} \label{Sn}
		There exists an action   of the symmetric group $S_n$   on ${\mathcal L}(\overline{{\mathcal M}_{0,n}^{\mathbb R}})$ which  is faithful and for each $\sigma\in S_n$ the  induced map $  {\mathcal L}(\overline{{\mathcal M}_{0,n}^{\mathbb R}}) \to {\mathcal L}(\overline{{\mathcal M}_{0,n}^{\mathbb R}})$ is an orientation preserving diffeomorphism.  
	\end{lemma}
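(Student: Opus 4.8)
The plan is to \emph{twist} the label action of Lemma~\ref{SnD} by the orientation-reversing deck involution $\pi$ recalled after Definition~\ref{dOC}, so as to cancel the orientation-reversing behaviour. Write $\phi_\sigma\colon{\mathcal L}(\oMn)\to{\mathcal L}(\oMn)$ for the diffeomorphism induced by $\sigma\in S_n$ in Lemma~\ref{SnD}. By construction $\phi_\sigma$ is a lift of the label action $\psi_\sigma$ on $\oMn$, i.e.\ $\rho\circ\phi_\sigma=\psi_\sigma\circ\rho$; moreover $\sigma\mapsto\phi_\sigma$ is a homomorphism, and both $\sigma\mapsto\phi_\sigma$ and $\sigma\mapsto\psi_\sigma$ are faithful (the latter being the well-known faithfulness of the $S_n$-action on $\oMn$). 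Since ${\mathcal L}(\oMn)$ is connected and oriented, each $\phi_\sigma$ is either orientation-preserving or orientation-reversing; let $\varepsilon\colon S_n\to\{\pm1\}$ record this. Because composition of diffeomorphisms multiplies these signs, $\varepsilon$ is a group homomorphism.

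The first real step is to establish the compatibility $\pi\circ\phi_\sigma=\phi_\sigma\circ\pi$ for every $\sigma$. I would argue that $\phi_\sigma\circ\pi\circ\phi_\sigma^{-1}$ is again a deck transformation of $\rho$, since $\rho\circ(\phi_\sigma\circ\pi\circ\phi_\sigma^{-1})=\psi_\sigma\circ\rho\circ\pi\circ\phi_\sigma^{-1}=\psi_\sigma\circ\rho\circ\phi_\sigma^{-1}=\rho$, and it is nontrivial because $\pi$ is; by uniqueness of the nontrivial deck transformation of the two-fold cover $\rho$ it must equal $\pi$. Now set $a(\sigma)=0$ when $\varepsilon(\sigma)=1$ and $a(\sigma)=1$ when $\varepsilon(\sigma)=-1$, and define $\Phi_\sigma=\pi^{a(\sigma)}\circ\phi_\sigma$. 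Using $\pi^2=\mathrm{id}$ and the commutation just proved,
\[
\Phi_\sigma\circ\Phi_\tau=\pi^{a(\sigma)}\circ\phi_\sigma\circ\pi^{a(\tau)}\circ\phi_\tau=\pi^{a(\sigma)+a(\tau)}\circ\phi_{\sigma\tau}=\pi^{a(\sigma\tau)}\circ\phi_{\sigma\tau}=\Phi_{\sigma\tau},
\]
where the penultimate equality holds because $a(\cdot)=\tfrac{1-\varepsilon(\cdot)}{2}$ is the composite of $\varepsilon$ with the isomorphism $\{\pm1\}\to\mathbb Z/2\mathbb Z$, hence a homomorphism, and $\pi^2=\mathrm{id}$. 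Thus $\sigma\mapsto\Phi_\sigma$ is an action of $S_n$ by diffeomorphisms, and each $\Phi_\sigma$ preserves orientation: it equals $\phi_\sigma$ when $\varepsilon(\sigma)=1$, and it is the composition of the two orientation-reversing maps $\pi$ and $\phi_\sigma$ when $\varepsilon(\sigma)=-1$.

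Faithfulness is then immediate: if $\Phi_\sigma=\mathrm{id}$ then $\phi_\sigma\in\{\mathrm{id},\pi\}$, so $\psi_\sigma\circ\rho=\rho\circ\phi_\sigma=\rho$, whence $\psi_\sigma=\mathrm{id}$ (as $\rho$ is surjective) and $\sigma=e$ by faithfulness of the label action on $\oMn$. I expect the only non-formal ingredient to be the commutation $\pi\circ\phi_\sigma=\phi_\sigma\circ\pi$, resting on the uniqueness of the deck involution of an orientation double cover; once that is in hand, checking that the correcting exponents $a(\sigma)$ add up consistently and that orientations behave as claimed is routine bookkeeping. (Implicitly $n\ge5$ here, since otherwise $\oMn$ is orientable and ${\mathcal L}(\oMn)$ is not defined.)
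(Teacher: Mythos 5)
Your proposal is correct and follows essentially the same route as the paper: both correct the lifted label action by composing with the deck involution $\pi$ exactly when the lift reverses orientation, and check that the resulting assignment is still a homomorphism (the paper packages this as an index-two subgroup of $S_n\oplus\mathbb Z_2$, you as twisting by the sign homomorphism $\varepsilon$, which is the same thing). Your explicit justification of the commutation $\pi\circ\phi_\sigma=\phi_\sigma\circ\pi$ via uniqueness of the nontrivial deck transformation of the double cover is a welcome detail that the paper only asserts.
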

	
	\begin{proof}
		%The symmetric group $S_n$ acts on $\overline{{\mathcal M}_{0,n}^{\mathbb R}}$ by the permutation of marked points. This action can be naturally  (i.e. $\pi \circ g_\sigma = g_\sigma \circ \pi$) lifted to ${\mathcal L}(\overline{{\mathcal M}_{0,n}^{\mathbb R}})$, $ \overline{{\mathcal M}_{0,n}^{\mathbb R}}^{OR}$, $ \overline{{\mathcal M}_{0,n}^{\mathbb R}}^{or}$,  $ \overline{{\mathcal M}_{0,n}^{\mathbb R}}^{\circ}$,  see \cite[Theorem 6]{DevMor}, but it !!! may not preserve the orientation. 
		Let $\sigma\in S_n$ and $g_\sigma$ denotes the  automorphism of ${\mathcal L}(\overline{{\mathcal M}_{0,n}^{\mathbb R}})$ determined by $\sigma$ according to Lemma~\ref{SnD}. %Let $\pi:  {\mathcal L}(\overline{{\mathcal M}_{0,n}^{\mathbb R}}) \to {\mathcal L}(\overline{{\mathcal M}_{0,n}^{\mathbb R}})$   be the orientation changing involution. !!! Move to the definition of L and define completely, with $\rho \circ \pi = \pi\circ \rho$. !!! Then $\pi \circ g_\sigma = g_\sigma \circ \pi$. % since ${\mathcal L}(\overline{{\mathcal M}_{0,5}^{\mathbb R}})$ is the orientation cover of  $\overline{{\mathcal M}_{0,5}^{\mathbb R}}$. 
		Then $\pi \circ g_\sigma = g_\sigma \circ \pi$ since $g_\sigma$ is lifted from the action on $\overline{{\mathcal M}_{0,n}^{\mathbb R}}$.
		It follows that $\pi$ is not induced by the $S_n$ action on ${\mathcal L}(\overline{{\mathcal M}_{0,n}^{\mathbb R}})$.
		Therefore the group $G=S_n\oplus {\mathbb Z}_2$ acts on ${\mathcal L}(\overline{{\mathcal M}_{0,n}^{\mathbb R}})$. Namely, the pair $(\sigma,a)\in G$ acts as $g_\sigma\circ \pi^a$, where   $a\in \{0,1\}$. This action is faithful and each element of the group acts as a diffeomorphism since it is a composition of two diffeomorphisms. 
		
		The map $\pi$ changes the orientation and $\pi^2=e$ preserves the orientation. Therefore, the map $g_\sigma \to g_\sigma\circ\pi$   is the bijection between the subsets of orientation preserving and orientation converting elements of $G$. Hence, exactly a half of the elements of $G$ preserves the orientation. 
		
		Let us consider the subset $G' \subset S_n\oplus {\mathbb Z}_2$:
		$$G'=\left\{ \begin{array}{cc} (\sigma,0), & \mbox{ if } g_\sigma \mbox{ preserves the orientation}, \\ (\sigma,1), & \mbox{ if } g_\sigma \mbox{ does not preserve the orientation}. \end{array}\right.  $$
		%is a subgroup of index 2. Indeed, $|G'|=|G|/2$ and 
		Observe that for $(\sigma_1,a_1), (\sigma_2,a_2)\in G'$ we have $(\sigma_1,a_1) (\sigma_2,a_2)=(\sigma_1 \sigma_2,a_1+a_2)\in G'$. Indeed, if $\sigma_1$ and $\sigma_2$ are both orientation preserving or both orientation converting, then $\sigma_1 \sigma_2$ preserves the orientation, and $a_1+a_2\in \{0+0,1+1\}=\{0\}$. If exactly one of $\sigma_1$ and $\sigma_2$ preserves the orientation, then $\sigma_1 \sigma_2$ converts the orientation, and  $a_1+a_2=0+1=1$. Therefore $G'\subset G$ is a subgroup, and its index is 2. For each $(\sigma,a)\in G'$ we have that the corresponding diffeomorphism $g_\sigma\circ \pi^a$ preserves the orientation of ${\mathcal L}(\overline{{\mathcal M}_{0,n}^{\mathbb R}})$. Hence, $G'$  acts on ${\mathcal L}(\overline{{\mathcal M}_{0,n}^{\mathbb R}})$  and preserves the orientation. Let us show that $G'\cong S_n$. We define   the map $f:G'\to S_n $ by $f(\sigma,a)= \sigma $. Then $f$ is a bijection and preserves the group operation by construction.   Thus, we found the orientation preserving action of $S_n$ on ${\mathcal L}(\overline{{\mathcal M}_{0,n}^{\mathbb R}})$. 
	\end{proof} 
	
	\begin{re}  Further, speaking about the action of $S_n$ on ${\mathcal L}(\overline{{\mathcal M}_{0,n}^{\mathbb R}})$ we always assume the orientation preserving action introduced in Lemma~\ref{Sn}. %Note that for the action introduced in Lemma~\ref{SnD} the subgroup of $S_n$ inducing the  orientation preserving  maps is~$A_n$, which follows from the .  
	\end{re}

	\section{Cell decompositions of the varieties  $\overline{{\mathcal M}_{0,5}^{\mathbb R}}$ and ${\mathcal L}(\overline{{\mathcal M}_{0,5}^{\mathbb R}})$}
	
	\subsection{Cell decomposition of    $\overline{{\mathcal M}_{0,5}^{\mathbb R}}$}
	
	By Example \ref{ex1} the cell decomposition of $\overline{{\mathcal M}_{0,5}^{\mathbb R}}$ consists of %$\frac{4!}{2}=12$ 
	cells of the maximal dimension $\dim(\overline{{\mathcal M}_{0,5}^{\mathbb R}})=5-3=2$. Figure \ref{fig5_1} represents one of these cells. Namely, this is the cell  such that all curves in this cell are marked by the points  1, 2, 3, 4, 5 in this order. The boundary of this cell consists of the cells of dimension 1 (stable curves with one nodal point) and the cells of dimension 0 (stable curves with two nodal points). %It is splitted to several pieces (several 1-dimensional cells) depending on the order of marked points in the components.
	We draw    1-dimensional cells as the edges of the pentagon at Figure \ref{fig5_1}  and  draw the typical representative of each cell nearby this edge.  %It is straightforward to see that each cell is a pentagon. Also each cell is labeled by a pentagon. Figure \ref{fig5_1} represents the cell, where the curves The cell  labeled by the pentagon with sides marked by the symbols 1, 2, 3, 4, 5 is represented at Figure \ref{fig5_1}. All curves which are in this cell have the same cyclic order of marked points shown at Figure \ref{fig5_1} inside the pentagon.  
	
	\begin{figure}[h]
		\centering
		\epsfig{figure=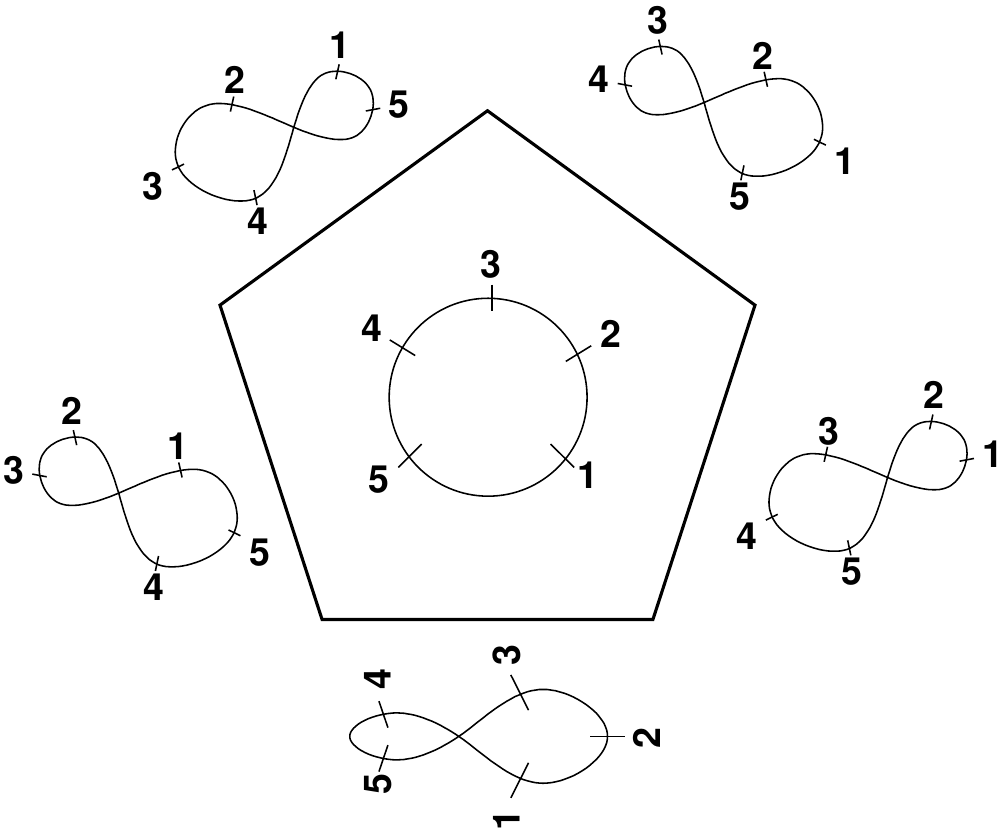,width=.35\linewidth}
		\caption{One of the dimension 2 cells of $\overline{{\mathcal M}_{0,5}^{\mathbb R}}$ with its boundary.} \label{fig5_1}
	\end{figure}
	
	There are 5 possibilities to split the ordered points  (1, 2, 3, 4, 5) into two components in such a way that a stable curve appears. Each 2-dimensional cell is a pentagon. %Note that the cell of the maximal dimension of $\overline{{\mathcal M}_{0,6}^{\mathbb R}}$ is not a 6-gon, but it has the dimension~3. 
	According to Construction~\ref{l:oMn_cell_dec} we mark the cell shown at Figure \ref{fig5_1} by the pentagon shown at Figure~\ref{Klgr0a}.

	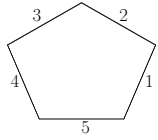
\begin{figure}[h]
		\centering
		\scalebox{0.4} % Change this value to rescale the drawing.
		{
			\begin{picture}(130,100)
				%\width=.35\linewidth
				\put(0,0){\line(1,0){80}}
				%		\put(180,0){\line(1,0){80}}
				\qbezier{(-30,70)(-15,35)(0,0)}
				\qbezier{(110,70)(95,35)(80,0)}
				\qbezier{(-30,70)(5,90)(40,110)}
				\qbezier{(40,110)(75,90)(110,70)}
				\put(40,-13){{\LARGE $5$}}
				\put(100,30){{\LARGE $1$}}
				\put(76,93){{\LARGE $2$}}
				\put(-6,93){{\LARGE $3$}}
				\put(-27,30){{\LARGE $4$}}
			\end{picture}
		}
		\caption{The pentagon marking the cell corresponding to marked point order $(1, 2, 3, 4, 5)$.}
		\label{Klgr0a}
	\end{figure}

	Each of the dimension 1 boundary cells is marked by a pentagon with a single diagonal as it is shown at Figure \ref{Klgr0b}. The first one corresponds to the bottom edge of the cell at Figure \ref{fig5_1}, and then next in the contra-clockwise order are considered.

	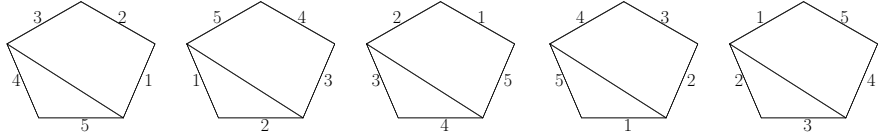
\begin{figure}[h]
		\centering
		\scalebox{0.4} % Change this value to rescale the drawing.
		{
			\begin{picture}(510,120)
				%	\width=.35\linewidth
				\put(0,0){\line(1,0){80}}
				\put(170,0){\line(1,0){80}}
				\qbezier{(-30,70)(-15,35)(0,0)}
				\qbezier{(110,70)(95,35)(80,0)}
				\qbezier{(-30,70)(5,90)(40,110)}
				\qbezier{(40,110)(75,90)(110,70)}
				\qbezier{(140,70)(155,35)(170,0)}
				\qbezier{(280,70)(265,35)(250,0)}
				\qbezier{(140,70)(175,90)(210,110)}
				\qbezier{(210,110)(245,90)(280,70)}
				\qbezier{(140,70)(195,35)(250,0)}
				\qbezier{(-30,70)(25,35)(80,0)}
				\put(40,-12){{\LARGE $5$}}
				\put(100,30){{\LARGE $1$}}
				\put(75,90){{\LARGE $2$}}
				\put(-5,90){{\LARGE $3$}}
				\put(-25,30){{\LARGE $4$}}
				\put(210,-12){{\LARGE $2$}}
				\put(270,30){{\LARGE $3$}}
				\put(245,90){{\LARGE $4$}}
				\put(165,90){{\LARGE $5$}}
				\put(145,30){{\LARGE $1$}}
				
				\put(340,0){\line(1,0){80}}
				%\put(540,0){\line(1,0){80}}
				\qbezier{(310,70)(325,35)(340,0)}
				\qbezier{(450,70)(435,35)(420,0)}
				\qbezier{(310,70)(345,90)(380,110)}
				\qbezier{(380,110)(415,90)(450,70)}
				
				\qbezier{(310,70)(365,35)(420,0)}

				\put(380,-12){{\LARGE $4$}}
				\put(440,30){{\LARGE $5$}}
				\put(415,90){{\LARGE $1$}}
				\put(335,90){{\LARGE $2$}}
				\put(315,30){{\LARGE $3$}}

			\end{picture} 
			%\scalebox{0.6}
			%{
				\begin{picture}(240,120)
					%\width=.35\linewidth
					\put(0,0){\line(1,0){80}}
					\put(170,0){\line(1,0){80}}
					\qbezier{(-30,70)(-15,35)(0,0)}
					\qbezier{(110,70)(95,35)(80,0)}
					\qbezier{(-30,70)(5,90)(40,110)}
					\qbezier{(40,110)(75,90)(110,70)}
					\qbezier{(140,70)(155,35)(170,0)}
					\qbezier{(280,70)(265,35)(250,0)}
					\qbezier{(140,70)(175,90)(210,110)}
					\qbezier{(210,110)(245,90)(280,70)}
					\qbezier{(140,70)(195,35)(250,0)}
					\qbezier{(-30,70)(25,35)(80,0)}
					%\put(40,125){$A)$}
					%\put(220,125){$B)$}
					\put(40,-12){{\LARGE $1$}}
					\put(100,30){{\LARGE $2$}}
					\put(75,90){{\LARGE $3$}}
					\put(-5,90){{\LARGE $4$}}
					\put(-25,30){{\LARGE $5$}}
					\put(210,-12){{\LARGE $3$}}
					\put(270,30){{\LARGE $4$}}
					\put(245,90){{\LARGE $5$}}
					\put(165,90){{\LARGE $1$}}
					\put(145,30){{\LARGE $2$}}
			\end{picture}	} %}
		\caption{The pentagons marking the boundary cells}
		\label{Klgr0b}
	\end{figure}

	The pentagons marking the two-dimensional cells on the other side of each dimension one boundary cell are shown at Figure \ref{Klgr0c}. Here each cell consists from the smooth curves. The order of the marked points on each curve inside a cell is given by the order of marks on the edges of the corresponding pentagon.

	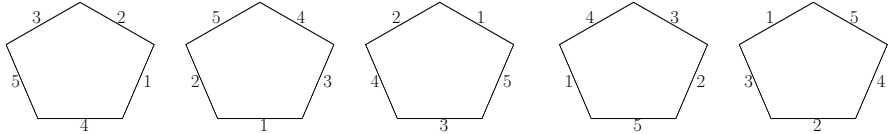
\begin{figure}[h]
		\centering
		\scalebox{0.4} % Change this value to rescale the drawing.
		{
			\begin{picture}(520,120)
				%	\width=.35\linewidth
				\put(0,0){\line(1,0){80}}
				\put(170,0){\line(1,0){80}}
				\qbezier{(-30,70)(-15,35)(0,0)}
				\qbezier{(110,70)(95,35)(80,0)}
				\qbezier{(-30,70)(5,90)(40,110)}
				\qbezier{(40,110)(75,90)(110,70)}
				\qbezier{(140,70)(155,35)(170,0)}
				\qbezier{(280,70)(265,35)(250,0)}
				\qbezier{(140,70)(175,90)(210,110)}
				\qbezier{(210,110)(245,90)(280,70)}

				\put(40,-12){{\LARGE $4$}}
				\put(100,30){{\LARGE $1$}}
				\put(75,90){{\LARGE $2$}}
				\put(-5,90){{\LARGE $3$}}
				\put(-25,30){{\LARGE $5$}}
				\put(210,-12){{\LARGE $1$}}
				\put(270,30){{\LARGE $3$}}
				\put(245,90){{\LARGE $4$}}
				\put(165,90){{\LARGE $5$}}
				\put(145,30){{\LARGE $2$}}
				
				\put(340,0){\line(1,0){80}}
				%\put(540,0){\line(1,0){80}}
				\qbezier{(310,70)(325,35)(340,0)}
				\qbezier{(450,70)(435,35)(420,0)}
				\qbezier{(310,70)(345,90)(380,110)}
				\qbezier{(380,110)(415,90)(450,70)}

				\put(380,-12){{\LARGE $3$}}
				\put(440,30){{\LARGE $5$}}
				\put(415,90){{\LARGE $1$}}
				\put(335,90){{\LARGE $2$}}
				\put(315,30){{\LARGE $4$}}

			\end{picture} 
			\begin{picture}(260,120)
				%\width=.35\linewidth
				\put(0,0){\line(1,0){80}}
				\put(170,0){\line(1,0){80}}
				\qbezier{(-30,70)(-15,35)(0,0)}
				\qbezier{(110,70)(95,35)(80,0)}
				\qbezier{(-30,70)(5,90)(40,110)}
				\qbezier{(40,110)(75,90)(110,70)}
				\qbezier{(140,70)(155,35)(170,0)}
				\qbezier{(280,70)(265,35)(250,0)}
				\qbezier{(140,70)(175,90)(210,110)}
				\qbezier{(210,110)(245,90)(280,70)}
				\put(40,-12){{\LARGE $5$}}
				\put(100,30){{\LARGE $2$}}
				\put(75,90){{\LARGE $3$}}
				\put(-5,90){{\LARGE $4$}}
				\put(-25,30){{\LARGE $1$}}
				\put(210,-12){{\LARGE $2$}}
				\put(270,30){{\LARGE $4$}}
				\put(245,90){{\LARGE $5$}}
				\put(165,90){{\LARGE $1$}}
				\put(145,30){{\LARGE $3$}}
		\end{picture}	}
		\caption{The pentagons marking the two-dimensional cells neighbor to the cell   at Figure~\ref{Klgr0a}}
		\label{Klgr0c}
	\end{figure}

	Two neighbor cells are shown at Figure  \ref{fig5_2}. The upper one is marked by the pentagon   at Figure \ref{Klgr0a}, the bottom one is marked by the first of the pentagons   at Figure \ref{Klgr0c}, and the boundary between them  is marked by the first pentagon   at Figure~\ref{Klgr0b}.

	\begin{figure}[h] 
		\centering
		\epsfig{figure=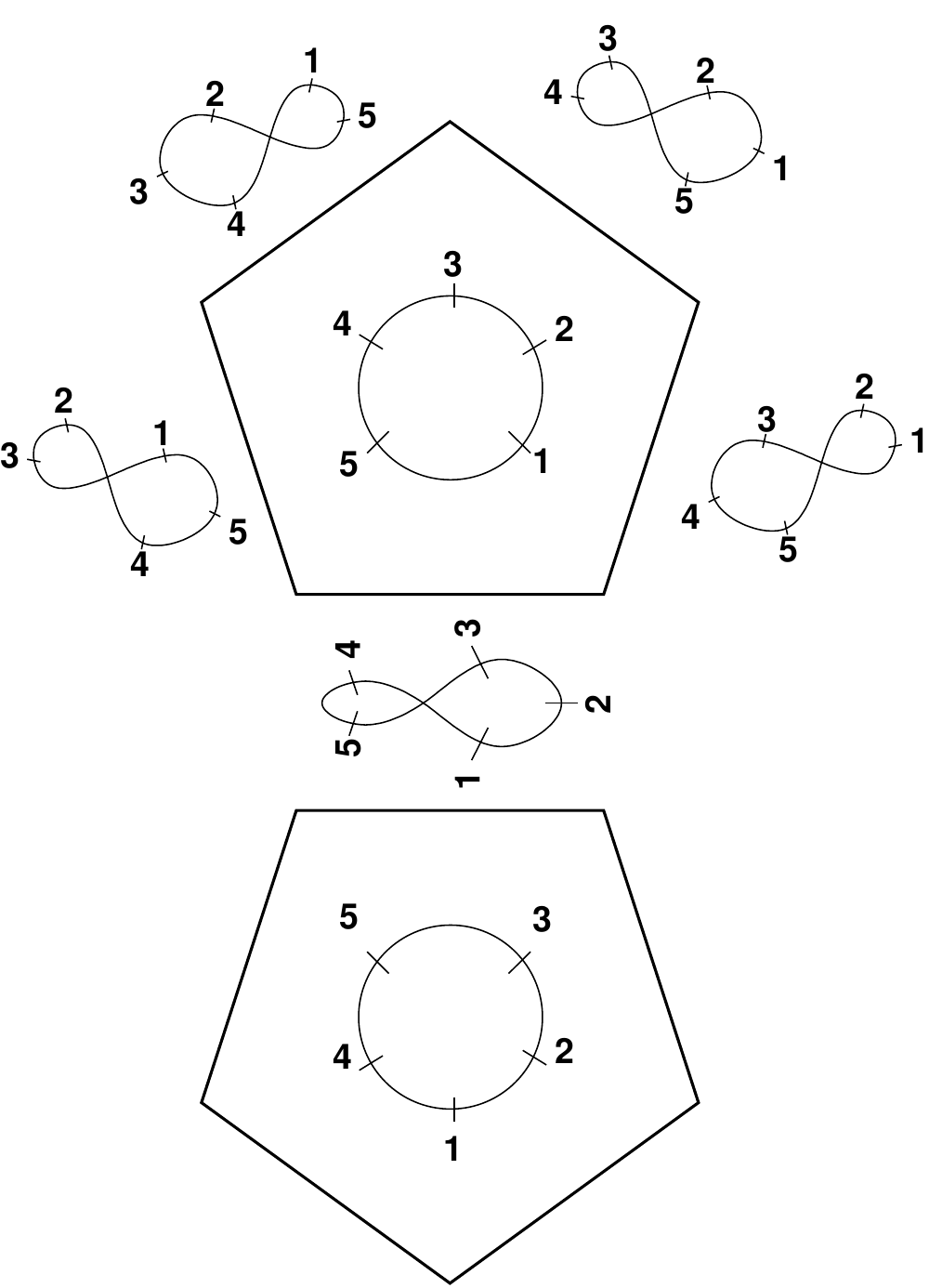,width=.30\linewidth}
		\caption{Two neighbor  cells of $\overline{{\mathcal M}_{0,5}^{\mathbb R}}$.} \label{fig5_2}
	\end{figure}

	\begin{lemma} \label{LemNum}
		The  cell decomposition of $\overline{{\mathcal M}_{0,5}^{\mathbb R}}$ contains 12 cells of dimension 2, 30 cells of dimension 1 and 15 cells of dimension 0.
	\end{lemma}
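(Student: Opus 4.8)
The plan is to count the cells dimension by dimension, identifying them with the combinatorial types of stable curves from Construction~\ref{l:oMn_cell_dec} (equivalently, with labelled pentagons carrying non-crossing diagonals, taken modulo twists and the dihedral group). The two-dimensional cells are those of maximal dimension, so Example~\ref{ex1} with $n=5$ gives at once $\tfrac{(5-1)!}{2}=12$ of them.

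For dimension $1$: such a cell consists of $2$-component stable curves $C=C_1\cup C_2$ meeting in a single node $p$. Write $S_i\subset\{1,\dots,5\}$ for the marked points lying on $C_i$. Condition~\ref{it4} of Definition~\ref{SC} forces $|S_i|+1\ge 3$, and since $|S_1|+|S_2|=5$ we get $\{|S_1|,|S_2|\}=\{2,3\}$; normalise so that $|S_1|=2$. The component $C_1$ then carries the three special points $S_1\cup\{p\}$ and is rigid, while $C_2$ carries the four special points $S_2\cup\{p\}$ and hence ranges over ${\mathcal M}_{0,4}^{\mathbb R}$, which has three connected components --- the three cyclic orders of four points up to reflection, equivalently the three open arcs that the $0$-cells cut out on $\overline{{\mathcal M}_{0,4}^{\mathbb R}}$. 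So for each of the $\binom{5}{2}=10$ choices of $S_1$ there are exactly three one-dimensional cells, giving $30$ in total.

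For dimension $0$: such a cell consists of stable curves with two nodes, hence three components, whose dual graph is a tree on three vertices by condition~\ref{it3}, i.e.\ a chain $C_1-C_2-C_3$. With $S_i$ the marked points on $C_i$, stability forces $|S_1|,|S_3|\ge 2$ at the two end components (one node each) and $|S_2|\ge 1$ at the middle one (two nodes), whence $|S_1|=|S_3|=2$ and $|S_2|=1$. Every component now has exactly three special points and is rigid, so the curve is determined by the singleton $S_2$ together with the \emph{unordered} pair $\{S_1,S_3\}$, unordered because reversing the chain is an isomorphism. There are $5$ choices for $S_2$ and $\tfrac12\binom{4}{2}=3$ ways to split the remaining four marked points into two unordered pairs, giving $15$ cells of dimension $0$.

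The computations themselves are immediate; the points that need a little care are (a) checking that the data recorded in each case --- the partition together with the cyclic order in dimension $1$, the triple $\big(S_2,\{S_1,S_3\}\big)$ in dimension $0$ --- are complete isomorphism invariants, so that distinct data give distinct cells and each cell is counted once, and (b) the identification, in the one-dimensional case, of the three cyclic orders with the three connected components of ${\mathcal M}_{0,4}^{\mathbb R}$. As a consistency check, $12-30+15=-3$, which is the Euler characteristic of the closed non-orientable surface $\overline{{\mathcal M}_{0,5}^{\mathbb R}}$.
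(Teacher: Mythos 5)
Your proposal is correct and follows essentially the same counting as the paper: $\tfrac{5!}{5\cdot 2}=12$ top cells, $\binom{5}{2}\cdot 3=30$ one-cells (the paper phrases your ``three components of ${\mathcal M}_{0,4}^{\mathbb R}$'' as the choice of the middle point among the remaining three), and $\tfrac12\cdot 5\cdot\binom{4}{2}=15$ zero-cells. Your extra verifications (stability forcing the partition types, and the Euler-characteristic check $12-30+15=-3$) are sound additions but do not change the argument.
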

	\begin{proof}
		The number of the cells of dimension 2 is equal to the number of the orderings of 1, 2, 3, 4, 5, divided by the order of the dihedral group of the pentagon. Hence, it is $\frac{5!}{5\cdot 2}=12$. 	
		
		The number of the cells of codimension 1  is equal to ${5\choose 2}\cdot {3\choose 1}=30$ since we have to choose 2 points on a separate component and 1 middle point from the other 3 points.
		
		The cells of codimension 2 consist of the 3-component stable curves. The component, which has 2 common points with the other components, contains 1 marked point and is called middle. Two other components contain 2 marked points on each of them.  The number of these cells is equal to $\frac12\cdot 5\cdot {4\choose 2} =15$ since we have to choose 1 of 5 points on the middle component, then 2 points on one of the  rest components, and then divide by 2 since these two components cannot be distinguished.
		
		Gluing these  cells we get the decomposition of $\overline{{\mathcal M}_{0,5}^{\mathbb R}}$ to  the pentagons, as it is shown on Figure~\ref{fig5_3}. 
	\end{proof}
	
	\begin{re} 
		The variety $\overline{{\mathcal M}_{0,5}^{\mathbb R}}$ is depicted at the Figures \ref{fig5_3} and \ref{fig5_4}. Gluing  between all 12 cells is shown at Figure~\ref{fig5_3}.
		%We demonstrate its decomposition to pentagons at Figure \ref{fig5_3}. %To obtain the target variety from this picture we need to glue the sides marked by the same symbols to each other. 
		At Figure  \ref{fig5_4} we draw the same surface, where all 
		sides marked by $a_1,\ldots, a_5$ are glued, and the sides marked by $b_1,b_2,b_3=c_1\cup c_2\cup c_3$ should be glued as the arrows show.
		%Gluing of 12     
	\end{re}
	
	%Adjoining  between all 12 cells is shown at Figure \ref{fig5_3}.
	
	\begin{figure}[!h] 
		%\centering\epsfig{figure=,angle=90,width=.7\linewidth}
		\centering{\epsfig{keepaspectratio,trim=110pt 150pt 0pt 170pt,clip,figure=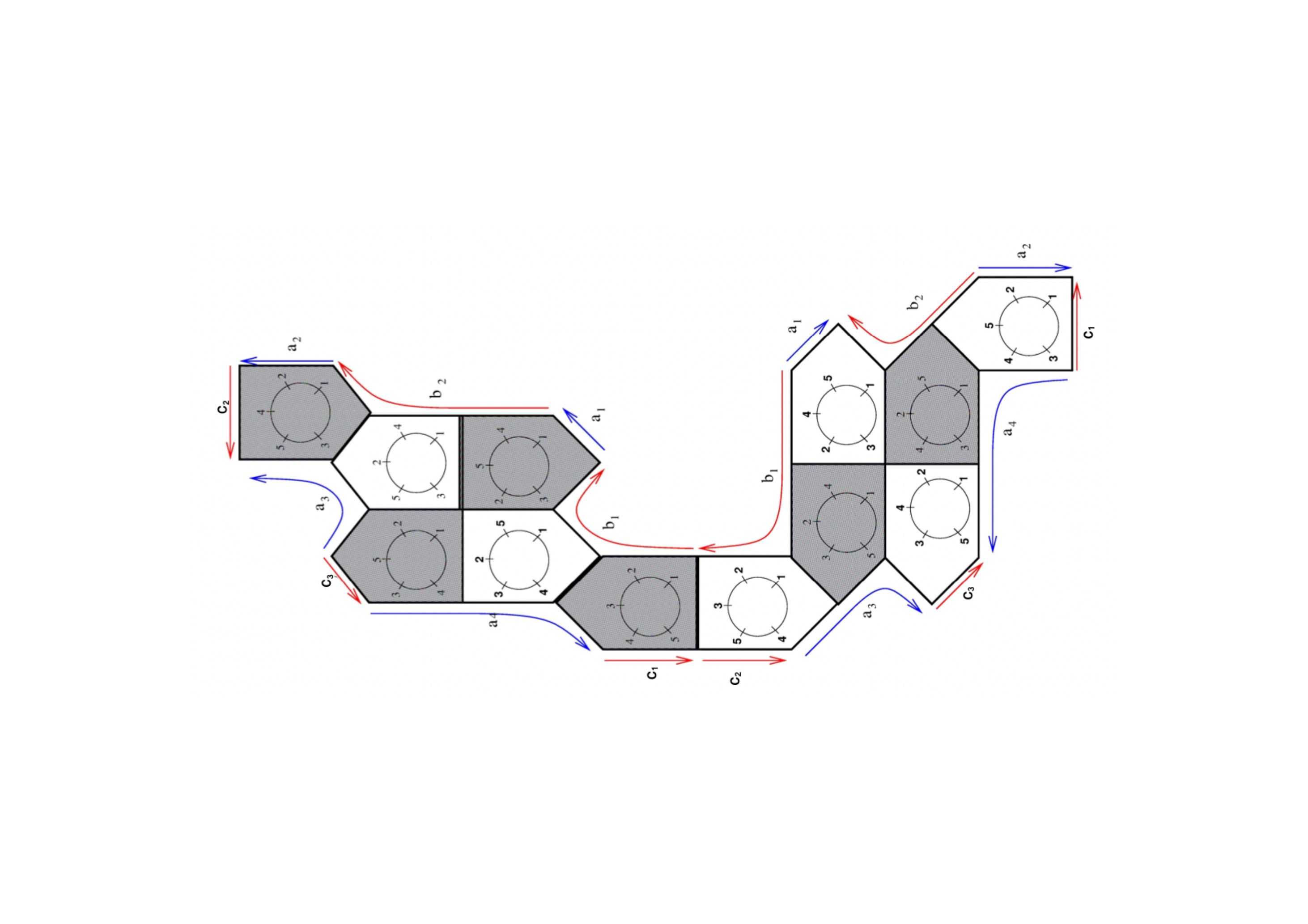,width=1.2\linewidth}} 
		\caption{Cell decomposition of the variety $\overline{{\mathcal M}_{0,5}^{\mathbb R}}$} \label{fig5_3}
	\end{figure}

	%Figure \ref{fig5_4} illustrates the variety $\overline{{\mathcal M}_{0,5}^{\mathbb R}}$, where all sides marked by $a_1,\ldots, a_5$ are glued, and the sides marked by $b_1,b_2,b'_3=b_3\cup b_4\cup b_5$ should be glued as the arrows show. 
	\begin{figure}[!h]
		\centering\epsfig{figure=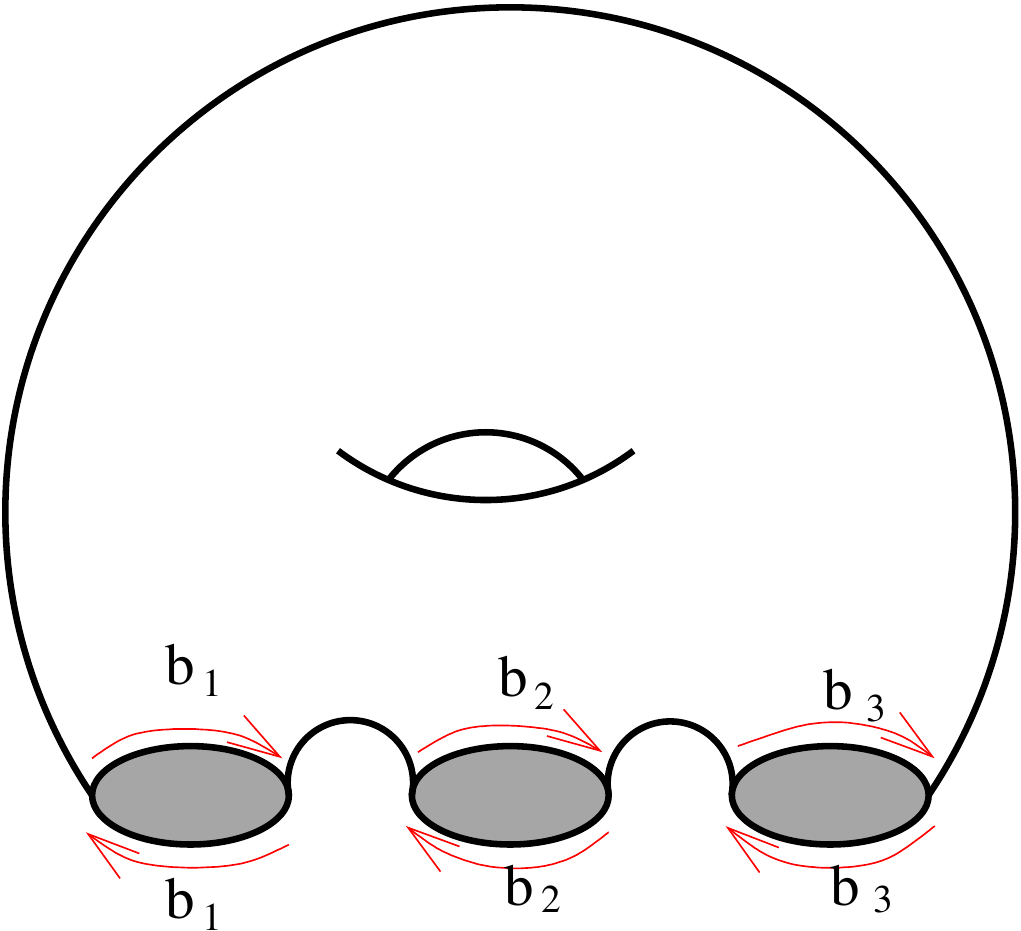,width=0.40\linewidth}
		\caption{Gluing of $\overline{{\mathcal M}_{0,5}^{\mathbb R}}$}  \label{fig5_4}
	\end{figure}
	%This surface is decomposed to the pentagons, as it is shown on Figure \ref{fig5_3}.

	\subsection{Cell decompositions of   ${\mathcal L}(\overline{{\mathcal M}_{0,5}^{\mathbb R}})$}
	
	%The whole glued surface ${\mathcal L}(\overline{{\mathcal M}_{0,5}^{\mathbb R}})$ is shown at Figure \ref{fig5_6}.
	
	%\begin{figure}[h]
	%	\centering\epsfig{figure=lmr05.pdf,width=.35\linewidth}
	%	\caption{The surface ${\mathcal L}(\overline{{\mathcal M}_{0,5}^{\mathbb R}})$}  \label{fig5_6}
	%\end{figure}

	By~\cite[Theorem 1.2, Figure 6]{AmbKre}  the orientation cover ${\mathcal L}(\overline{{\mathcal M}_{0,5}^{\mathbb R}})$  
	given by  Definition \ref{dOC} can be obtained in the following way. 
	Each cell of the maximal dimension described in Construction \ref{l:oMn_cell_dec} is   doubled with the both possible orientations. Then all cells are   glued altogether in such a way that if $C_1$ and $C_2$ are the cells glued in $\overline{{\mathcal M}_{0,5}^{\mathbb R}}$, then $C_1$ is glued   either to $C_2$ or to its doubling $C_2'$ such that  the orientation of cells according to common edges is consistent. Correspondingly, $C_1'$ is glued either to $C_2'$ or to $C_2$.  
	By~\cite[Propositions 2.1 and 2.2]{Kreck} the gluing of all 24 cells altogether is a connected orientable surface and  the result of gluing  is the orientation cover ${\mathcal L}(\overline{{\mathcal M}_{0,5}^{\mathbb R}})$ with a fixed orientation. 
	%
	%\begin{figure}[!h]
	%	\centering\epsfig{figure=???,width=.70\linewidth}
	%	\caption{The surface ${\mathcal L}(\overline{{\mathcal M}_{0,5}^{\mathbb R}})$}  \label{Fig11}
	%\end{figure}
	%
	Euler formula gives that the genus of this surface is $4$ 
	and it can be  obtained from Figure \ref{fig5_4} by cutting along the edges $b_1,b_2,b_3$, marking one of the two obtained edges $b_i$   by   $b_i'$, applying mirror symmetry to the obtained surface with boundary $b_1\cup b_1'\cup b_2\cup b_2'\cup b_3\cup b_3'$ and gluing $b_i$ on the one surface with a boundary with $b_i'$ on another surface with a boundary, $i=1,2,3$.
	
	The exact way how we glue    $24=12\cdot 2$ pentagons to obtain ${\mathcal L}(\overline{{\mathcal M}_{0,5}^{\mathbb R}})$  is shown at Figure~\ref{fig5_5}. Another way to scan and visualize this variety can be seen in \cite[Fig.~13]{DevMor}.  
	
	\begin{figure}[!h]
		%	\centering\epsfig{figure=,width=.70\linewidth}
		\centering\epsfig{figure=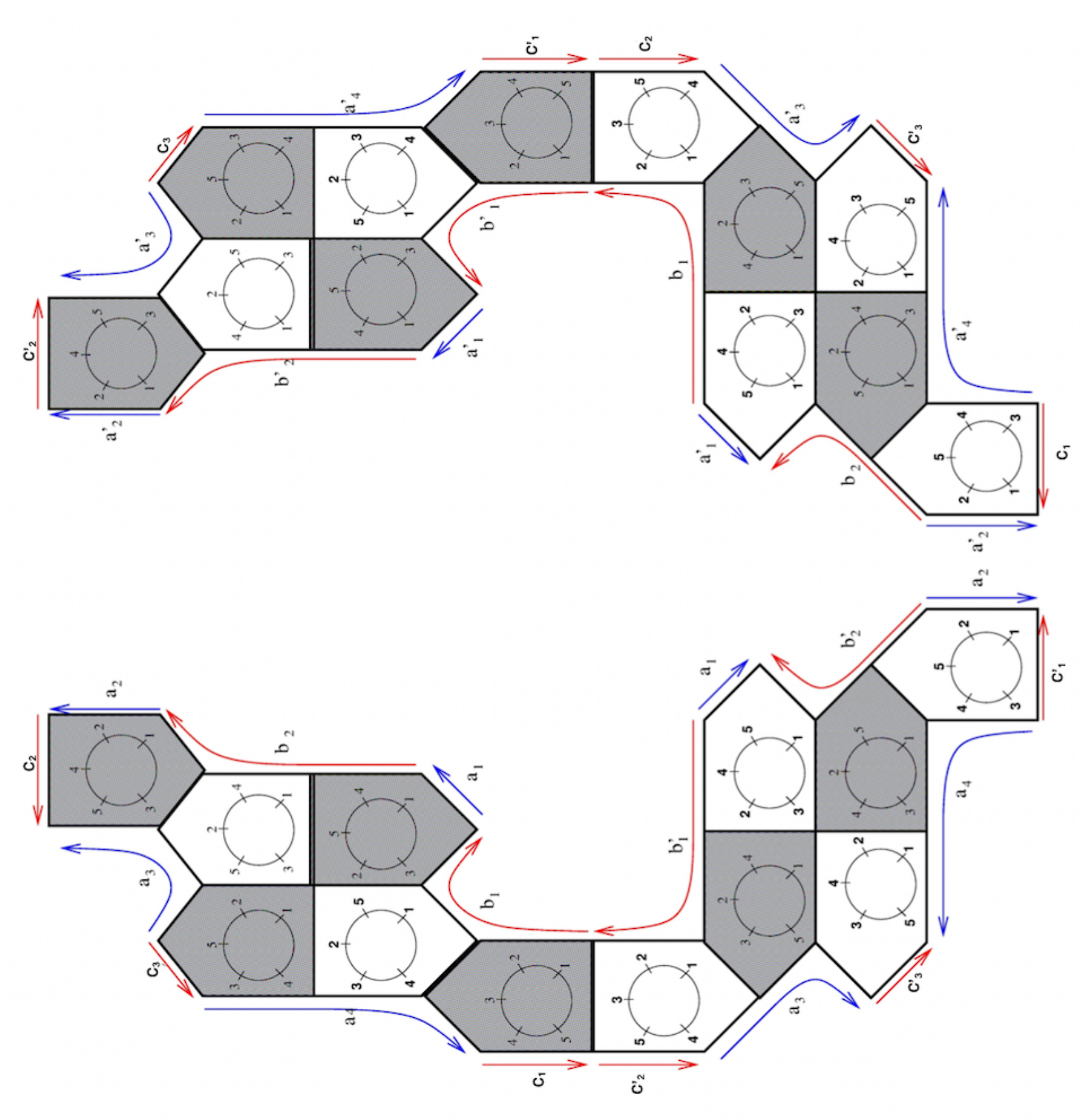,width=.90\linewidth}
		\caption{The scanning of ${\mathcal L}(\overline{{\mathcal M}_{0,5}^{\mathbb R}})$}  \label{fig5_5}
	\end{figure}

	\section{Dessins d'enfants and Belyi pairs}
	
	\subsection{Basic definitions and notations}
	\begin{de} \label{DefDD}
		A \emph{dessin d'enfant} is a~compact connected smooth oriented surface~$S$
		together with a bi-colored graph~$\Gamma$ embedded into $S$, such that the complement
		$S\setminus \Gamma$ is homeomorphic to a~disjoint union of open disks.
	\end{de}

	\begin{de}  \label{DefBP}
		Let $\mathcal{X}$ be an irreducible smooth algebraic curve over $\mathbb{C}$. A function $\beta$ on $\mathcal{X}$ is called a \textit{Belyi function}, if it defines the covering $\beta : \mathcal{X} \rightarrow \mathbb{P}^1(\mathbb{C})$ which is unramified  over all points from  $\mathbb{P}^1(\mathbb{C}) \setminus \{0,1,\infty\}$. The pair $(\mathcal{X},\beta)$ is called a \textit{Belyi pair}.
	\end{de}
	
	Dessins d'enfants are  naturally related with Belyi pairs   in the following way.   A topological model of~$\mathcal{X}$ is used to be  the surface $S$. For the graph we consider the preimage
	$\beta^{-1}([0,1])$.  
	Here the edges  are $\{\beta ^{-1}((0,1))\}$, white vertices are
	$\{\beta ^{-1 }(\{1\})\}$, and black vertices are $\{\beta ^{-1 }(\{0\})\}$. It is known that since  $(\mathcal{X},\beta)$ is  a Belyi pair, a dessin d'enfant appears by this construction. Also for any dessin d'enfant there is a Belyi pair to appear from. Moreover, it is proved in \cite{ShabVoev} that for naturally defined morphisms the category of Belyi pairs is equivalent to the category of dessins d'enfants.    
	Detailed and self-contained information concerning the correspondence between   dessins and Belyi pairs arising from this equivalence can be found in \cite{AmbDis,AmbKreSh,ShabNew,ShabVoev}. We present just several known properties that we are going to use below.
	
	\begin{re}  Let $(S,\Gamma)$ be a dessin d'enfant and $(\mathcal{X},\beta)$ be its Belyi pair.
		
		1. Converting the vertex coloring in $\Gamma$ corresponds to the following transformation of the  Belyi pairs: $(\mathcal{X},\beta) \to (\mathcal{X}, 1-\beta)$. 
		
		2. The transformation $(\mathcal{X},\beta) \to (\mathcal{X}, %\displaystyle \frac{1}
		{4\beta(1-\beta)})$ converts   $\Gamma$ to the graph $\Gamma'$ embedded into the same surface $S$. The  black vertices of $\Gamma'$ are all vertices of $\Gamma$  and  white vertices  of $\Gamma'$ are of valency 2 and are located at the middles of all edges of $\Gamma$.  %So, it shows that non-bicolored dessins can be also considered. They correspond to so-called {\em clean} Belyi pairs, i.e., all ramifications of $\beta$ over 1 have the order 2. In this case the vertices of the dessin are preimages of $\{0\}$, and the  edges are $\{\beta ^{-1}((0,1])\}$. 
		
		3.  The transformation $(\mathcal{X},\beta) \to (\mathcal{X}, \displaystyle \frac{1}
		{\beta})$ converts   $\Gamma$ to the graph $\Gamma''$ embedded into the same surface $S$. The white vertices of $\Gamma''$ and $\Gamma$ are the same, but centers of faces of $\Gamma$ became black vertices of $\Gamma''$ and vice versa. The edges of $\Gamma''$ are the preimages of the interval $(0,1)$   of the function $\frac{1}
		{\beta}$, i.e., the preimages of the interval $(1,\infty)$  of the function~$ \beta$.
		\label{Re4.3}
	\end{re}

	\begin{de} 
		A dessin d'enfant is called \emph{regular} if it has an edge–transitive, colour– and orientation–
		preserving automorphism group. \end{de}

	\subsection{Dessin d'enfant for ${\mathcal L}(\overline{{\mathcal M}_{0,5}^{\mathbb R}})$}

	\begin{lemma}
		Drawing white vertices in the middles of edges  of the cell decomposition of ${\mathcal L}(\overline{{\mathcal M}_{0,5}^{\mathbb R}})$ provides  a   dessin d'enfant ${\mathcal D}$ on the genus 4 surface. The faces of ${\mathcal D}$ are  10-gons. ${\mathcal D}$ contains 24 faces, 120 edges, 30 black  vertices of valency 4, and  60 white  vertices of valency 2. 
	\end{lemma}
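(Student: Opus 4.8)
The plan is to treat the statement as a bookkeeping consequence of the cell decomposition of ${\mathcal L}(\oMf)$ described in Section~3: first I would fix the number of cells of each dimension, then exhibit ${\mathcal D}$ as a subdivision of the $1$-skeleton and verify the three conditions of Definition~\ref{DefDD}, and finally read off the numbers of faces, edges and vertices together with the vertex valencies.

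For the cell counts, Lemma~\ref{LemNum} gives that $\oMf$ is decomposed into $12$ pentagonal two-cells, $30$ one-cells and $15$ zero-cells. Since the orientation covering $\rho\colon {\mathcal L}(\oMf)\to\oMf$ is a double cover and every open cell is contractible, $\rho^{-1}$ of an open cell is a disjoint pair of cells of the same dimension; hence ${\mathcal L}(\oMf)$ inherits a decomposition into $24$ pentagonal two-cells, $60$ one-cells and $30$ zero-cells --- exactly the one assembled in Section~3. Now let $\Gamma_1$ be the $1$-skeleton of this decomposition, colour its vertices (the $30$ zero-cells) black, and insert a white vertex at the midpoint of each one-cell; this produces the bi-coloured embedded graph ${\mathcal D}$ (compare Remark~\ref{Re4.3}). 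The surface is connected, so $\Gamma_1$, and with it ${\mathcal D}$, is connected; each edge of ${\mathcal D}$ is a half of a one-cell and therefore joins a black vertex to a white one, so the colouring is proper; and inserting midpoints does not change the complement, so ${\mathcal L}(\oMf)\setminus{\mathcal D}$ is the disjoint union of the open two-cells, each an open disk. Thus ${\mathcal D}$ is a dessin d'enfant on the genus $4$ surface ${\mathcal L}(\oMf)$.

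It remains to count. The faces of ${\mathcal D}$ are the $24$ two-cells. Each of the $60$ one-cells contributes one white vertex and is cut into two edges, which yields $60$ white vertices, each of valency $2$ by construction, and $120$ edges. The $30$ black vertices are the zero-cells; by Lemma~\ref{Sn} the group $S_5$ acts transitively on the zero-cells of $\oMf$ --- they are parametrized by the choice of the marked point on the middle component and the splitting of the remaining four points into two pairs --- so all black vertices have one and the same valency, and since every edge of ${\mathcal D}$ has exactly one black endpoint this valency is $120/30=4$. Finally, each two-cell is a pentagon bounded by five distinct one-cells meeting in five distinct zero-cells (as one sees from the explicit gluing of the $24$ pentagons in Section~3, Figures~\ref{fig5_3} and~\ref{fig5_5}), so after inserting the five midpoints its boundary is a closed walk through $10$ distinct edges, i.e.\ a $10$-gon. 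As a consistency check, $V-E+F=(30+60)-120+24=-6=2-2\cdot4$, in agreement with the genus being $4$.

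The two points that are not pure bookkeeping are: (i) that every black vertex has valency exactly $4$, which I obtain from the transitivity of the $S_5$-action on zero-cells (Lemma~\ref{Sn}) together with the edge double-count, and which can alternatively be verified directly by listing the pentagonal one-cells incident to a given $3$-component stable curve; and (ii) that the boundary walk of each pentagonal two-cell is a genuine $10$-cycle of ${\mathcal D}$ rather than a shorter closed walk repeating an edge or a vertex, which follows from the combinatorics of the gluing recorded in Section~3. Granting these, every number in the statement is immediate.
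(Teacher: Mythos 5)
Your proof is correct and follows the same route as the paper, whose own argument is a one-line reference to Lemma~\ref{LemNum} and Definition~\ref{dOC}: double the $12$/$30$/$15$ cell counts through the orientation covering and read off the dessin data. You merely spell out the details the paper leaves implicit (the even covering of each cell, the valency-$4$ count via $S_5$-transitivity and edge double-counting, and the Euler characteristic check).
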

	
	\begin{proof}
		The result follows from Lemma~\ref{LemNum} and Definition~\ref{dOC}. 
	\end{proof}

	\subsection{Dual dessins d'enfants} 
	The concept of dual dessins d'enfants arises while someone interchanges the critical values of a Belyi function. Correspondingly, 6 types of duality, or, probably, 3 types of duality and 2 types of 'thriality' arise. However, at  the moment only the case of so-called clean Belyi pairs, where each ramification over 1 has the order 2 is  well-known and investigated. In this case dual dessin is  the preimage of $[0,1]$ segment of the  function $1/\beta$. The detailed and self-contained information on this subject can be found in~\cite{GurShab}.
	
	We generalize the notion introduced in~\cite{GurShab} to the general case verbatim.
	
	\begin{de} 
		Let $(X,\Gamma)$ be a dessin d'enfant, and $(\mathcal{X},\beta)$ be the corresponding Belyi pair. A dessin $(X,\Gamma^*)$  is called \emph{dual} to $(X,\Gamma )$ if $\Gamma^*$ is the preimage of $[0,1]$ of the  function $1/\beta:\mathcal{X} \rightarrow \mathbb{P}^1(\mathbb{C}).$ 
	\end{de} 
	
	\begin{re} 
		A dessin $(X,\Gamma^*)$  is   dual to $(X,\Gamma)$ iff $(X,\Gamma^*)$ is a dessin d'enfant such that the sets of white vertices of $\Gamma$ and $\Gamma^*$ coincide, the set of black vertices of $\Gamma^*$ coincides with the set of centers of faces of~$\Gamma$, and the edges connect centers of faces of~$\Gamma$ with all white vetrices incident to this face.
	\end{re} 
	
	\begin{de}  \label{DeDu}
		Let $(X,\Gamma)$ be a dessin d'enfant, $(X,\Gamma^*)$ be its dual dessin. The union   $(X,\Gamma)\cup (X,\Gamma^*)$ is the dessin on $X$ defined as follows: the white vertices are the common white verices of the graphs $\Gamma$ and $\Gamma^*$, its black vertices are constituted by the union of black vertices of $\Gamma$ and black vertices of $\Gamma^*$, and its edges are the union of edges of $\Gamma$ and~$\Gamma^*$. 
	\end{de}

	\begin{re}
		Let $(X,\Gamma)$ be a dessin d'enfant, and $\beta$ be its Belyi function. Then   
		the Belyi function of the union $(X,\Gamma)\cup (X,\Gamma^*)$ is $\frac{4\beta}{(\beta+1)^2}$ on the same curve, this can be verified directly, see also \cite[Example~5.1]{Zv}.
	\end{re}

	\section{ 4-icosahedron and its properties }
	
	In this section we follow the paper \cite{Zv} by A. Zvonkin, where among the other results  the Belyi pair for  4-icosahedron is investigated.
	
	\subsection{Bring curve}
	
	\begin{de}
		\emph{Bring curve} is an algebraic curve in 4-dimensional complex projective space with coordinates $x_1:\ldots :x_5$ defined by the system of equations:
		$$B_5:\quad \left\{ \begin{array}{l} \sum_{i=1}^5 x_i=0\\ \sum_{i=1}^5 x^2_i=0\\\sum_{i=1}^5 x^3_i=0\end{array}\right.$$
	\end{de}
	
	In 1786 Erland Bring, a history professor at
	Lund, found a change of variables which reduces a generic quintic equation to the   form $ q(x)= x^5 + a x + b$. Then in 1884 Felix Klein \cite{Klein}  introduced Bring curve and firstly investigated its properties. A set of five roots of the equation $q(x)=0$ usually gives rise to 120 points on $B_5$ which are different permutations of these  roots. For some particular values of parameters $a$ and $b$ the number of such points may become smaller.

	\begin{lemma} \cite[Example 5.5, Proposition 5.6]{Zv} Let $x_1,\ldots ,x_5$ be the zeros of the equation $x^5+ax+b=0$. Then the function $$f_{B_5}(x_1:\ldots :x_5)=\frac{256a^5}{256 a^5+3125 b^4}$$ is a Belyi function on the Bring curve~$B_5$. The degree of $f_{B_5}$ on $B_5$ is~$120$.
	\end{lemma}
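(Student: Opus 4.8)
\emph{The plan} is to exhibit $f_{B_5}$ as a Möbius transformation composed with the natural morphism from $B_5$ to a weighted projective line, and then to read off its degree and ramification from elementary properties of quintics in Bring form. A point $(x_1:\ldots:x_5)$ of $B_5$ is an ordered $5$-tuple of complex numbers, not all zero, considered up to a common rescaling $x_i\mapsto\mu x_i$, whose power sums satisfy $p_1=p_2=p_3=0$. By Newton's identities this is equivalent to $e_1=e_2=e_3=0$, so the $x_i$ are exactly the roots of $t^5+at+b$ with $a=e_4(x_1,\ldots,x_5)$, $b=-e_5(x_1,\ldots,x_5)$; since the tuple is never the zero tuple we have $(a,b)\ne(0,0)$, and rescaling acts by $(a,b)\mapsto(\mu^4a,\mu^5b)$. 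Hence there is a well-defined morphism
\[
\phi\colon B_5\longrightarrow\mathbb{P}(4,5),\qquad (x_1:\ldots:x_5)\longmapsto(a:b),
\]
to the weighted projective line $\mathbb{P}(4,5)$, which is isomorphic to $\mathbb{P}^1$ via the affine coordinate $u:=a^5/b^4$ (both $a^5$ and $b^4$ have weight $20$). Since $256a^5+3125b^4=\mathrm{disc}(t^5+at+b)$, we get $f_{B_5}=256a^5/(256a^5+3125b^4)=256u/(256u+3125)=m\circ\phi$, where $m$ is the Möbius transformation $u\mapsto256u/(256u+3125)$, which sends $u=0,\infty,-3125/256$ to $0,1,\infty$ respectively. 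As $m$ is an automorphism of $\mathbb{P}^1$, the function $f_{B_5}$ is non-constant and has the same degree and ramification profile as $\phi$; irreducibility and smoothness of $B_5$, which the notion of a Belyi pair presupposes, are classical.

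\emph{Degree.} For $(a:b)$ with $a,b\ne0$ and nonzero discriminant, the quintic $t^5+at+b$ has five distinct nonzero roots, and their $120$ orderings yield $120$ distinct points of $B_5\subset\mathbb{P}^4$: a permutation $\sigma$ fixing $(x_1:\ldots:x_5)$ must satisfy $x_{\sigma(i)}=\lambda x_i$ for a single $\lambda$, which for generic such $(a:b)$ forces $\sigma=e$. Thus the generic fibre of $\phi$ has $120$ points, so $\deg f_{B_5}=\deg\phi=120$.

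\emph{Ramification.} Because $\phi$ is $S_5$-invariant for the coordinate-permutation action, the generic stabilizer is trivial, and $\deg\phi=120=|S_5|$, the morphism $\phi$ is the quotient map $B_5\to B_5/S_5\cong\mathbb{P}(4,5)$; hence the ramification index of $\phi$ at a point $P$ equals $|\mathrm{Stab}_{S_5}(P)|$. A nontrivial stabilizer element acts on the multiset $\{x_1,\ldots,x_5\}$ as multiplication by a root of unity $\lambda$, with $\lambda=1$ only when two coordinates coincide. Using $p_1=p_2=p_3=0$ together with the observation that a primitive $d$-th root of unity forces every $\langle\lambda\rangle$-orbit on the nonzero $x_i$ to have size $d$, one lists all configurations with nontrivial stabilizer: (i) $a=0$, where $\{x_i\}$ is a scalar multiple of the set of fifth roots of unity and $\mathrm{Stab}\cong\mathbb{Z}/5$, giving $24$ points over $u=0$ each of ramification index $5$ — i.e.\ over $f_{B_5}=0$; (ii) $b=0$, where $\{x_i\}=\{0\}\cup(\text{scalar multiple of the fourth roots of unity})$ and $\mathrm{Stab}\cong\mathbb{Z}/4$, giving $30$ points over $u=\infty$ each of index $4$ — i.e.\ over $f_{B_5}=1$; (iii) $\mathrm{disc}(t^5+at+b)=0$, hence $a,b\ne0$, where the quintic has a single double root $r=-5b/(4a)$ and three distinct simple roots and $\mathrm{Stab}\cong\mathbb{Z}/2$ is generated by the transposition of the two positions carrying $r$, giving $60$ points over $u=-3125/256$ each of index $2$ — i.e.\ over $f_{B_5}=\infty$. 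Consequently $f_{B_5}$ is unramified over $\mathbb{P}^1\setminus\{0,1,\infty\}$, so $(B_5,f_{B_5})$ is a Belyi pair of degree $120$, with passport $5^{24}$ over $0$, $4^{30}$ over $1$, $2^{60}$ over $\infty$; as a check, Riemann--Hurwitz gives $2g-2=120\cdot(-2)+24\cdot4+30\cdot3+60\cdot1=6$, so $g(B_5)=4$.

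\emph{Main obstacle.} The crux is the classification used in the ramification step — that the only $5$-tuples with vanishing first three power sums admitting an extra symmetry are the three families above, with exactly the stated stabilizers; in particular, that each point of the discriminant locus carries a simple double root (a triple root would force $b=0$, which is excluded there) and hence stabilizer exactly $\mathbb{Z}/2$. The remaining ingredients — Newton's identities, the discriminant formula, and the fact that for a quotient $X\to X/G$ of smooth projective curves the ramification index equals the order of the point stabilizer — are standard.
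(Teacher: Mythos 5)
Your proposal is correct. Note, however, that the paper does not prove this lemma at all: it is imported verbatim from Zvonkin's paper (Example 5.5 and Proposition 5.6 of \cite{Zv}), and the only justification the paper supplies is the subsequent Remark establishing well-definedness of $f_{B_5}$ via Vieta/Newton and the weighted equivalence $(a,b)\sim(\lambda^4a,\lambda^5b)$ --- which is exactly the first part of your argument. The rest of your write-up is a complete, self-contained proof of the cited facts, and it checks out: the identification $256a^5+3125b^4=\mathrm{disc}(t^5+at+b)$ is the standard formula for trinomial quintics; the fibre count over a generic $(a:b)$ is indeed $120$ because $e_4\ne0$ and $e_5\ne0$ force the rescaling constant $\lambda$ to satisfy $\lambda^4=\lambda^5=1$; the identification of $\phi$ with the quotient by $S_5$ legitimately reduces ramification to stabilizers; and the three exceptional loci $a=0$, $b=0$, $\mathrm{disc}=0$ are pairwise disjoint in $\mathbb{P}(4,5)$ and exhaust the points with nontrivial stabilizer (the same $\lambda^4=\lambda^5=1$ argument shows a nontrivial stabilizer with $\lambda\ne1$ forces $a=0$ or $b=0$, while $\lambda=1$ forces a repeated root, necessarily a single double root since a triple root would give $a=0$). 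Your Riemann--Hurwitz check recovering $g=4$ and the passport $5^{24}$, $4^{30}$, $2^{60}$ matches the combinatorics of $I_4\cup I_4^*$ stated later in the paper ($24$ black vertices of valency $5$, $30$ white vertices of valency $4$, $60$ quadrilateral faces), which is a useful consistency confirmation. In short: where the paper outsources the statement to \cite{Zv}, you have reconstructed the proof; the approach is the natural one and agrees with the paper's remark on the part the paper does discuss.
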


	\begin{re}
		By classical Vieta theorem, the coefficients of the equation $q(x)=0$ are the elementary symmetric  functions in the roots $x_1,\ldots, x_5$ of  $q(x)=0$. Since $q(x)=x^5+ax+b$ we have that the elementary symmetric functions of the degrees 1, 2, 3 of $x_1,\ldots, x_5$
		are 0. Therefore, the power sums of the degrees 1, 2, 3 of $x_1,\ldots, x_5$ are 0. Thus  $(x_1:\ldots: x_5)$ satisfies the equations of the Bring curve $B_5$, i.e., $(x_1:\ldots: x_5)\in B_5$. The coefficients $a$ and $b$  are the symmetric polynomials in $x_1,\ldots, x_5$ of degrees
		4 and 5, respectively. Since  $(x_1 : x_2 : x_3 : x_4 : x_5)$ is
		a projective point, the pairs $(a, b)$ should be considered up to the equivalence relation
		$(a, b) \sim (\lambda^
		4a, \lambda^5b)$ for any $\lambda\in {\mathbb C}\setminus \{0\}$. Then the expression  $ \frac{256a^5}{256 a^5+3125 b^4}$ depends only on the point  $(x_1 : x_2 : x_3 : x_4 : x_5)$ on the Bring curve, i.e., it is a   function on~$B_5$.
	\end{re}
	
	\subsection{Another icosahedron}
	
	We start with %!!! 
	a regular icosahedron.  The icosahedron is a Platonic solid consisting of 20 triangles. Each vertex of this solid is incident to 5 different triangles. It has 30 edges and 12 vertices. This solid produces a regular dessin d'enfant on a sphere by adding a white vertex to the middle of each of the edges. So, we have a dessin d'enfant of genus 0 with 30 white vertices, 12 black vertices that are the vertices of the icosahedron, 60 edges, and 20 faces.

	While a graph of an icosahedron is embedded into the oriented surface, we have a cyclic order of edges in any vertex, so the   relation  "to be next one" on edges in a vertex can be defined. Below we want to fix the names for the   edges in any vertex. 
	\begin{de} \label{de:neigh} For any black vertex $v$ let us enumerate the edges incident to $v$ in the contra-clockwise order as $(e_1^v,e_2^v,e_3^v,e_4^v,e_5^v)$. Here we can start with any fixed edge.
	\end{de}
	We fix the introduced edge marks for the future. Each edge has exactly one mark since it has one black vertex and one white vertex.
	
	Let us consider a graph of the icosahedron as an abstract graph now. For any abstract graph, if we determine a cyclic order of edges in all vertices, we embed this graph into a certain surface. This procedure creates a dessin d'enfant from this graph, see \cite[Section 1.3.3]{LZ}.  Below  a regular dessin of positive genus is provided by  the icosahedron graph, cf.~\cite[Exercise 1.3.13]{LZ}.

	\begin{de} \label{defI_4}
		Let us consider a dessin d'enfant obtained from the icosahedron graph by fixing the $(e_1^v,e_3^v,e_5^v,e_2^v,e_4^v)$ order of the edges in each black vertex $v$. 
		Following \cite{Zv} we denote this dessin by $I_4$ and call it 4-icosahedron. \end{de} 
	
	Each face of such dessin is a pentagon. In a given vertex $v$ this pentagon can be obtained as a section of the original Platonic solid by a plane containing a pair of edges from the set $\{(e_1^v,e_3^v)$, $(e_2^v,e_4^v)$,  $(e_3^v,e_5^v)$, $(e_4^v,e_1^v)$,   $(e_5^v,e_2^v)\}$. So, 5 faces are intersecting in the vertex~$v$.
	
	\begin{re}
		If $\sigma$ is a permutation determining cycle order of edges in black vertices of regular icosahedron then $I_4$ is determined by~$\sigma^2$.
	\end{re}
	
	Note that the valency of any white vertex is 2 by construction. So, there is no possibility to change the order of edges in white vertices.
	
	For the introduced dessin $I_4$ the following statement holds.
	
	\begin{lemma} \cite[Example 2.9]{Zv} $I_4$ is a regular dessin d'enfant on a genus 4 surface. It is a figure with 12 black vertices of valency 5, 30 white vertices of valency 2, 60 edges 
		and 12 faces of valency 5. Its automorphism group is~$A_5$.  \label{LemI4} \end{lemma}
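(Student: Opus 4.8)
\medskip
\noindent\textbf{Proof proposal.}
The plan is to deduce everything from the description of $I_4$ as the dessin obtained from the icosahedron dessin by squaring the rotation $\sigma$ around the black vertices, together with the free transitive action of the icosahedral group $A_5$ on darts. First I would record the combinatorial data directly from the construction: the underlying abstract graph is the $1$-skeleton of the icosahedron ($12$ vertices, $30$ edges), the black vertices are its $12$ vertices (each of valency $5$), and inserting a white vertex at the midpoint of each edge creates $30$ white vertices of valency $2$ and splits the $30$ icosahedron edges into $60$ edges of $I_4$. By the paragraph following Definition~\ref{defI_4}, every face of $I_4$ is a pentagon and exactly $5$ of them meet at each black vertex; counting black-vertex/face incidences ($5$ per vertex, $5$ per pentagonal face) then gives $12$ faces. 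Euler's formula for this cell decomposition yields $12+30-60+12=-6=2-2g$, so $g=4$.

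For the symmetry statements I would identify the $60$ darts of the icosahedron dessin with the elements of $A_5$ using a fixed base dart and the free transitive action of the rotation group $A_5$; then rotation around black vertices becomes right multiplication by a $5$-cycle $s$, the flip across white midpoints becomes right multiplication by an involution $t$, the group $A_5$ acts on the left by colour- and orientation-preserving automorphisms, and connectedness of the icosahedron dessin gives $\langle s,t\rangle=A_5$. Since $I_4$ has black permutation $\sigma^2$, i.e.\ right multiplication by $s^2$, and the same white permutation, and since $s^2$ has order $5$ so that $\langle s^2,t\rangle=\langle s,t\rangle=A_5$, the dessin $I_4$ is again connected, while left multiplication by $A_5$ still commutes with right multiplication by $s^2$ and by $t$ and hence still acts freely and transitively by colour- and orientation-preserving automorphisms. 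Thus $I_4$ is regular, and because the automorphism group of a connected dessin acts freely on its darts we get $|\mathrm{Aut}(I_4)|\le 60$; together with the embedding $A_5\hookrightarrow\mathrm{Aut}(I_4)$ this forces $\mathrm{Aut}(I_4)\cong A_5$.

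The step I expect to be the actual content is pinning down that the faces are pentagons, equivalently that $\mathrm{ord}(s^2t)=5$ in $A_5$: in the regular-representation picture the faces of $I_4$ are the cycles of right multiplication by $(s^2t)^{-1}$, each of length $\mathrm{ord}(s^2t)$, and there are $60/\mathrm{ord}(s^2t)$ of them. I would argue this from the relations $s^5=t^2=(st)^3=1$ presenting $A_5$: since the possible orders of elements of $A_5$ are $1,2,3,5$, it suffices to exclude orders $2$ and $3$ for $s^2t$, which is a short direct check (for instance in an explicit model $s=(1\,2\,3\,4\,5)$ with a suitably chosen double transposition $t$ making $st$ a $3$-cycle). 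This yields $60/5=12$ pentagonal faces, consistent with the incidence count, reconfirms $g=4$ via $\chi=12+30-60+12=-6$, and dovetails with the geometric description in the text, where each face is cut out by a plane through a pair of ``second-neighbour'' edges at a black vertex and is a regular pentagon of the icosahedron. The remaining items — the vertex and face valencies and the edge count — are then immediate.
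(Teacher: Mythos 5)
Your proof is correct, but note that the paper does not prove this lemma at all: it is quoted from Zvonkin's paper (Example 2.9 and the surrounding discussion in \cite{Zv}), and the only argument the present text offers is the informal geometric remark that each face of $I_4$ is cut out by a plane through a pair of second-neighbour edges at a black vertex. Your argument supplies an actual self-contained verification, and it is the standard one for regular dessins: identify the $60$ darts with the elements of the icosahedral rotation group $A_5$ acting simply transitively, so that the monodromy of the icosahedron dessin is right multiplication by $s,t$ with $s^5=t^2=(st)^3=1$, while $A_5$ acts on the left as automorphisms. You correctly isolate the one nontrivial computation, namely $\mathrm{ord}(s^2t)=5$ (e.g. $s=(1\,2\,3\,4\,5)$, $t=(1\,2)(3\,4)$ gives $st=(1\,3\,5)$ and $s^2t=(1\,4\,5\,2\,3)$), which yields $12$ pentagonal faces and hence $\chi=42-60+12=-6$, $g=4$; connectedness follows from $\langle s^2,t\rangle=\langle s,t\rangle=A_5$ because $5$ is odd, and the bound $|\mathrm{Aut}|\le 60$ from the free action on darts pins down $\mathrm{Aut}(I_4)\cong A_5$. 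What your route buys is independence from the cited source and an explicit handle on the monodromy group; what the paper's citation buys is brevity and consistency with Zvonkin's notation, on which the later identification of $I_4\cup I_4^*$ with the Bring curve relies. No gaps.
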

	
	Figure \ref{F8a} is the scanning of the dessin $I_4$. Here equal symbols mark the edges that have to be glued to obtain the original surface. Note that some of the authomorphisms of the dessin can be easily seen on this scanning. In particular, the rotation of different parts in opposite directions on the angle $2\pi/5$ is an authomorphism. The map $p_i \leftrightarrow q_i$ and $s_i \leftrightarrow r_i$, $i=1,\ldots, 5$ is a mirror symmetry (not automorphism) of the dessin.
	
	%\vspace{-3.5cm}
	
	\begin{figure}[htbp]
		\includegraphics[trim=0pt 50pt 0pt 230pt,clip,width=1\textwidth]{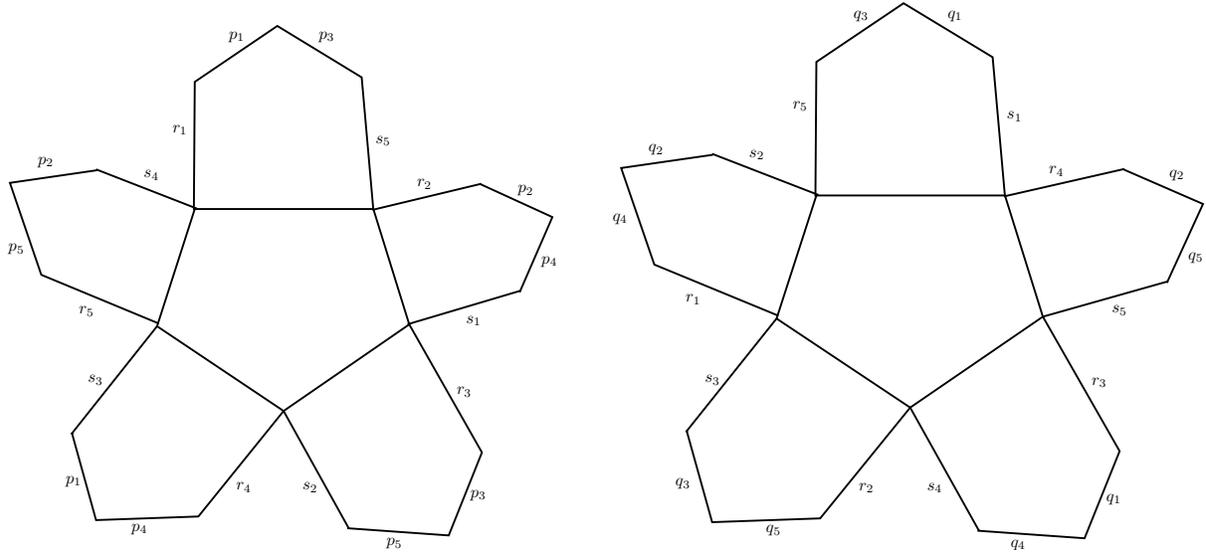}
		\caption{Scanning of $I_4$} \label{F8a}
	\end{figure}
	
	Beautiful pictures of the dessin $I_4$, including some mosaic on the floor in St.Mark’s basilica, Venice; attributed to Paolo Ucello
	(around 1430), can be found in \cite[pages 338 and~368]{Zv}. 
	
	Let $I_4^*$ denotes the dual dessin d'enfant for~$I_4$. 
	
	\begin{lemma} \cite[Example 5.5]{Zv} The dessins $I_4$ and $I^*_4$ are isomorphic.  \end{lemma}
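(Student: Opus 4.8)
The plan is to reduce the statement to a finite verification inside $A_5$, exploiting that both dessins are regular.

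First I would record that $I_4^*$ is again a regular dessin with automorphism group $A_5$. By the Remark following Definition~\ref{DeDu}, $I_4^*$ has the same $30$ white vertices of valency $2$, its $12$ black vertices are the centres of the pentagonal faces of $I_4$, and its faces are the $12$ valency-$5$ black vertices of $I_4$; hence $I_4^*$ has $60$ edges and the same passport $(5^{12},2^{30},5^{12})$ as $I_4$. Moreover the $A_5$-action of Lemma~\ref{LemI4} permutes the faces and the white vertices of $I_4$, so it acts on $I_4^*$, and this action is still free and transitive on the $60$ edges of $I_4^*$ (an automorphism fixing an edge of $I_4^*$ fixes a face and an incident white vertex of $I_4$ together with their local configuration, hence is the identity). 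Thus $I_4^*$ is regular with monodromy group $A_5$.

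Next I would pass to monodromy triples. A regular dessin with monodromy group $A_5$ and ramification orders $(5,2,5)$ is encoded by a triple $(g_0,g_1,g_\infty)$ of generators of $A_5$ with $g_0g_1g_\infty=1$, $\operatorname{ord}(g_0)=\operatorname{ord}(g_\infty)=5$, $\operatorname{ord}(g_1)=2$, well defined up to conjugation by $\operatorname{Aut}(A_5)=S_5$; and two such dessins are isomorphic precisely when their triples are $S_5$-conjugate. Using the Remark that $I_4$ arises from the icosahedron by replacing the black rotation $\sigma$ by $\sigma^2$, I would write an explicit triple for $I_4$: take $g_0=(1\,2\,3\,4\,5)$ together with a suitable involution $g_1$ so that $g_\infty=(g_0g_1)^{-1}$ is a $5$-cycle and $\langle g_0,g_1\rangle=A_5$; a direct check pins $g_1$ down. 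Since the duality $\beta\mapsto1/\beta$ interchanges the marked points $0$ and $\infty$ and thereby reverses the local orientation, the triple of $I_4^*$ is obtained by swapping the two order-$5$ entries and passing to inverses, i.e.\ $(g_1^{-1}g_0^{-1},\,g_1^{-1},\,g_0^{-1}g_1^{-1})$, which because $g_1^2=1$ is $(g_1g_0^{-1},\,g_1,\,g_0^{-1}g_1)$ — a triple of the same cycle types generating the same group.

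Finally I would produce the isomorphism: it amounts to exhibiting $\tau\in S_5$ conjugating the $I_4$-triple to the $I_4^*$-triple. Since $g_0$ and $g_1g_0^{-1}$ are both $5$-cycles they are $S_5$-conjugate, and the only content is to conjugate one to the other while normalising $g_1$; the centraliser of the involution $g_1$ in $S_5$ is dihedral of order $8$, so this is a short explicit search, and one verifies that an appropriate $\tau$ — necessarily odd, so that its composite with the orientation reversal above yields an honest colour- and orientation-preserving isomorphism — does the job. (As a cross-check, a census of generating triples of type $(5,2,5)$ in $A_5$ up to $\operatorname{Aut}(A_5)$ confirms that $I_4$ and $I_4^*$ fall into the same class.) I expect the only delicate point to be exactly this orientation bookkeeping: keeping careful track of the inversions forced by the $0\leftrightarrow\infty$ swap and matching them against the parity of $\tau$, so that the resulting equivalence is a genuine isomorphism of dessins and not merely a mirror-isomorphism — recall that the mirror $p_i\leftrightarrow q_i$, $s_i\leftrightarrow r_i$ noted after Figure~\ref{F8a} is \emph{not} an automorphism of $I_4$.
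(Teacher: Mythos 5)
First, note that the paper does not actually prove this lemma: it is quoted verbatim from Zvonkin \cite[Example 5.5]{Zv}, so any argument you supply is new content rather than a variant of the paper's own proof. Your overall strategy --- encode $I_4$ and $I_4^*$ as generating triples of $A_5$ of type $(5,2,5)$ and compare them up to $\operatorname{Aut}(A_5)=S_5$ --- is the right one, and your preliminary step (that $I_4^*$ is again a regular dessin of type $(5,2,5)$ with group $A_5$, because $\operatorname{Aut}(I_4)$ acts freely and transitively on the face/white-vertex flags) is sound.

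There are, however, two concrete problems in the execution. (i) Your formula for the dual triple, $(g_1^{-1}g_0^{-1},\,g_1^{-1},\,g_0^{-1}g_1^{-1})$, is not a valid monodromy triple: its product is $(g_1g_0^{-1})^2g_1$, which equals $1$ only if $g_1=g_0^{2}$ --- impossible for an involution and the square of a $5$-cycle. What ``swap the outer entries and invert'' actually produces is $(g_\infty^{-1},g_1^{-1},g_0^{-1})$. (ii) More importantly, the justification is wrong: $z\mapsto 1/z$ is a M\"obius transformation, hence holomorphic and orientation-\emph{preserving}; it carries a positively oriented small loop around $0$ to a positively oriented small loop around $\infty$, so the triple of $I_4^*$ is a simultaneous conjugate of $(g_\infty,\,g_1^{c},\,g_0^{c'})$ with \emph{no} inversion. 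Inverting all entries is the mirror operation, so your prescription computes the mirror of $I_4^*$ --- exactly the pitfall you flag in your last sentence but never resolve; for a chiral dessin this would flip the answer. The good news is that the ``cross-check'' you relegate to a parenthesis is in fact the cleanest complete proof and renders all of this orientation bookkeeping moot: fixing $g_0=(1\,2\,3\,4\,5)$, exactly five involutions $b\in A_5$ make $g_0b$ a $5$-cycle while generating $A_5$, and the centralizer $\langle g_0\rangle$ permutes these five transitively, so there is a \emph{unique} $S_5$-class of generating $(5,2,5)$-triples of $A_5$. Hence any two regular $(5,2,5)$-dessins with automorphism group $A_5$ are isomorphic, and $I_4\cong I_4^*$ already follows from your first paragraph; the middle of your argument can be discarded rather than repaired.
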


	We consider the union $I_4\cup I_4^*$ as a  dessin d'enfant determined by Definition~\ref{DeDu}. 
	This dessin has 24 black vertices of the valency 5, 30 white vertices of the valency 4, and 60 faces, each of which is a quadrilateral.
	
	\begin{theorem} \cite[Proposition 5.6]{Zv} 1. Automorphism group of the dessin $I_4 \cup I_4^*$ is the symmetric group $S_5$.
		
		2. The Belyi pair corresponding to $I_4 \cup I_4^*$ is $(B_5,f_{B_5})$.
	\end{theorem}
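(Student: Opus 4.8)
The plan is to deduce both statements at once by showing that $I_4\cup I_4^*$ and the dessin d'enfant of $f_{B_5}$ are literally the same dessin. Concretely I would: (i) compute the ramification data (passport) of $f_{B_5}$ over $0,1,\infty$; (ii) note that it agrees with the passport of $I_4\cup I_4^*$; (iii) prove that each of the two dessins is regular with automorphism group $S_5$; (iv) prove that a regular dessin with automorphism group $S_5$ and that passport is unique up to isomorphism. Steps (ii)--(iv) force $I_4\cup I_4^*$ and the dessin of $f_{B_5}$ to be isomorphic, so by the equivalence of the categories of dessins and Belyi pairs (\cite{ShabVoev}) the Belyi pair corresponding to $I_4\cup I_4^*$ is $(B_5,f_{B_5})$, which is part~2; part~1 is part of step~(iii).

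For step~(i) the key remark is that $256a^5+3125b^4$ is a nonzero scalar multiple of the discriminant of the trinomial $x^5+ax+b$, so $f_{B_5}=\frac{256a^5}{256a^5+3125b^4}$ vanishes exactly on the stratum $a=0$, equals $1$ exactly on the stratum $b=0$, and equals $\infty$ exactly on the discriminant locus. Since $a=e_4$ and $b=-e_5$ are symmetric in $x_1,\dots,x_5$, the function $f_{B_5}$ is invariant under the coordinate-permutation action of $S_5$ on $B_5$ and thus factors through the quotient $q\colon B_5\to B_5/S_5$ followed by a map $\bar f\colon B_5/S_5\to\bP$; as $q$ has degree $|S_5|=120$ and $f_{B_5}$ also has degree $120$, the map $\bar f$ is an isomorphism and $B_5/S_5\cong\bP$, so $f_{B_5}$ has precisely the critical points and ramification indices of $q$, and the ramification index at a point of $B_5$ equals the order of its stabiliser. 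Evaluating these stabilisers at the three strata: over $f_{B_5}=0$ the roots form a regular pentagon $\{r,r\z,\dots,r\z^4\}$ whose ordered projective tuples are stabilised by the cyclic group of order $5$ generated by a $5$-cycle, giving $120/5=24$ points of ramification index $5$; over $f_{B_5}=1$ the roots are $\{0,s,is,-s,-is\}$, stabilised by a cyclic group of order $4$, giving $30$ points of ramification index $4$; over $f_{B_5}=\infty$ the quintic on the discriminant locus has a single double root (the unique common zero of $q$ and $q'$), stabilised by a transposition, giving $60$ points of ramification index $2$. Hence the passport of $f_{B_5}$ is $(5^{24},4^{30},2^{60})$, which is exactly the passport read off from the description of $I_4\cup I_4^*$ given before the theorem ($24$ black vertices of valency $5$, $30$ white vertices of valency $4$, $60$ quadrilateral faces).

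For step~(iii): the coordinate-permutation action embeds $S_5$ into the automorphism group of the dessin of $f_{B_5}$, while the automorphism group of any dessin of degree $120$ has order at most $120$ (it acts freely on the $120$ edges); hence it equals $S_5$ and the dessin is regular. For $I_4\cup I_4^*$, its automorphism group contains ${\rm Aut}(I_4)=A_5$ (Lemma~\ref{LemI4}); moreover $I_4\cong I_4^*$ (\cite[Example~5.5]{Zv}), and this isomorphism is induced by an automorphism $\psi$ of the underlying curve satisfying $\b\circ\psi=1/\b$ for the Belyi function $\b$ of $I_4$, so $\psi$ fixes the Belyi function $4\b/(\b+1)^2$ of the union (Definition~\ref{DeDu}) while $\b\circ\psi\ne\b$; hence $\psi\in{\rm Aut}(I_4\cup I_4^*)\setminus A_5$. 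Thus $|{\rm Aut}(I_4\cup I_4^*)|$ is a proper multiple of $60$ not exceeding $120$, i.e. equals $120$, and the dessin is regular. Its automorphism group is an extension of ${\mathbb Z}_2$ by the centreless group $A_5$; the only such extensions are $A_5\times{\mathbb Z}_2$ and $S_5$, and since a regular dessin with a white vertex of valency $4$ has an element of order $4$ in its automorphism group, whereas $A_5\times{\mathbb Z}_2$ has none, the group is $S_5$.

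Finally, step~(iv): a regular dessin with automorphism group $S_5$ and passport $(5^{24},4^{30},2^{60})$ corresponds to a generating pair $(x,y)$ of $S_5$ with $x,y,xy$ of orders $5,4,2$, taken up to ${\rm Aut}(S_5)$; so $x$ is a $5$-cycle, $y$ a $4$-cycle, and $xy$ an involution, which is odd (a product of an even and an odd permutation) and hence a transposition, and $\langle x,y\rangle=S_5$ automatically (it is transitive on $5$ points and contains a transposition). For a fixed $5$-cycle $x$, the admissible $y$ are the $x^{-1}t$ with $t$ a transposition of two cyclically adjacent letters of $x$, and conjugation by $x$ permutes these five $t$ cyclically; so $(x,y)$ is unique up to simultaneous conjugacy, and since $S_5$ is complete, unique up to ${\rm Aut}(S_5)$. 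Therefore $I_4\cup I_4^*$ and the dessin of $f_{B_5}$ coincide. The hard part is step~(i): recognising $256a^5+3125b^4$ as the discriminant and carrying out the stabiliser/orbit count carefully in the weighted-projective setting $(a,b)\sim(\l^4a,\l^5b)$, which is precisely what collapses the naive $120$ tuples over each stratum down to $24$, $30$, and $60$. An alternative to steps~(iii)--(iv) would be to compute the monodromy of $4\b/(\b+1)^2$ from that of $\b$ (the latter being the classical $(5,2,5)$ generating triple of $A_5$) and match it directly with that of $f_{B_5}$ at $0,1,\infty$, which is more conceptual but has heavier bookkeeping for the composed covering.
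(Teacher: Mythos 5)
This theorem is stated in the paper as an imported result --- it is cited verbatim from \cite[Proposition 5.6]{Zv} and no proof is given in the text --- so there is no internal argument to compare yours against. Your proposal is, as far as I can check, a correct and essentially complete self-contained proof, and it is a genuinely independent route: you identify the two dessins by a rigidity argument (compute the passport $(5^{24},4^{30},2^{60})$ of $f_{B_5}$ via the quotient map $B_5\to B_5/S_5$, show both dessins are regular with group $S_5$, and show a regular $S_5$-dessin with that passport is unique because the generating pair (5-cycle, 4-cycle) with product a transposition is rigid up to ${\rm Aut}(S_5)={\rm Inn}(S_5)$). The key computations check out: $256a^5+3125b^4$ is indeed the discriminant of $x^5+ax+b$; the stabilisers along the strata $a=0$, $b=0$ and the discriminant locus are cyclic of orders $5$, $4$, $2$; the fibre over $\infty$ consists only of quintics with a single double root (a triple root would force $a=b=0$); and the elimination of $A_5\times{\mathbb Z}_2$ via the absence of an element of order $4$ is a nice touch. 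Two small points you should make explicit to be airtight: the $S_5$-action on $B_5$ by coordinate permutation is faithful (a nontrivial permutation acting trivially would force $B_5$ into the discriminant locus), which is needed both for $\deg q=120$ and for the embedding $S_5\hookrightarrow{\rm Aut}$ of the dessin of $f_{B_5}$; and the input $\deg f_{B_5}=120$, which you take from the quoted lemma, can alternatively be recovered from Riemann--Hurwitz once the passport is known, so you should be careful not to make the argument circular --- the cleanest fix is to compute the degree directly as the number of ordered root tuples of a generic trinomial. What your approach buys is a proof that does not rely on \cite{Zv} at all; what the paper's citation buys is brevity, since for its purposes only the statement is needed downstream (Lemmas~\ref{L0} and~\ref{Main}).
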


	\section{Belyi pair of  the cell decomposition of   ${\mathcal L}(\overline{{\mathcal M}_{0,5}^{\mathbb R}})$}

	\begin{definition} \label{Def:J}
		We denote by $\mathcal J$ the  dessin obtained from $(I_4\cup I_4^*)^*$ by re-coloring of white and black vertices. 
	\end{definition}
	
	\begin{lemma} \label{L0}
		The Belyi pair of   ${\mathcal J}$    is $(B_5,1-\frac1{f_{B_5}})$.
	\end{lemma}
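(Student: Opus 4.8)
The plan is to chain together three facts already recorded above, each of which translates a geometric operation on dessins into an elementary substitution of the Belyi function, all carried out on the fixed curve $B_5$.

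First I would invoke the Theorem \cite[Proposition 5.6]{Zv} quoted at the end of Section~4: the Belyi pair of the dessin $I_4\cup I_4^*$ is $(B_5,f_{B_5})$, with $f_{B_5}$ as in the displayed formula there. Second, by the definition of a dual dessin recalled just before Definition~\ref{DeDu} (the dessin $(X,\Gamma^*)$ dual to $(X,\Gamma)$ is the preimage of $[0,1]$ under $1/\beta$), the Belyi pair of $(I_4\cup I_4^*)^*$ is $(B_5,\,1/f_{B_5})$ on the same curve; this is exactly the content of part~3 of Remark~\ref{Re4.3}, which says that $\beta\mapsto 1/\beta$ interchanges the roles of black vertices and face centres while leaving the surface and the white vertices untouched, i.e.\ it realizes the passage to the dual. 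Third, re-colouring the white and black vertices corresponds, by part~1 of Remark~\ref{Re4.3}, to the transformation $\beta\mapsto 1-\beta$; applying it to $1/f_{B_5}$ produces $1-1/f_{B_5}$.

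Since each of these two substitutions maps the set $\{0,1,\infty\}$ to itself and leaves the curve unchanged, the function $1-1/f_{B_5}$ is again a Belyi function on $B_5$, and composing the operations in the order prescribed by Definition~\ref{Def:J} --- first take the dual, then swap the colours --- yields precisely the pair $(B_5,\,1-1/f_{B_5})$. There is essentially no obstacle here; the only point requiring care is the order of the two transformations, since $1/\beta$ and $1-\beta$ do not commute: performing them in the opposite order would give $1/(1-f_{B_5})$ instead, so one must read off from Definition~\ref{Def:J} that the dualization is applied first and the re-colouring second.
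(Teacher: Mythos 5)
Your argument is correct and is essentially the paper's own proof, just spelled out in more detail: the paper likewise observes that $x\mapsto 1-1/x$ (a M\"obius map permuting $\{0,1,\infty\}$, sending $1\to 0$ and $\infty\to 1$) realizes the composite of dualization ($\beta\mapsto 1/\beta$) and re-colouring ($\beta\mapsto 1-\beta$) prescribed by Definition~\ref{Def:J}, applied to the pair $(B_5,f_{B_5})$ of $I_4\cup I_4^*$. Your remark about the order of the two non-commuting substitutions is a worthwhile point of care, consistent with the paper.
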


	\begin{proof} We remark that the transformation $\rho:  x\to 1-1/x$ maps the interval $[1,\infty)$ to the segment $[0,1]$ moving $1 \to 0$ and $\infty \to 1$. The rest follows from the definition of ${\mathcal J}$. 
	\end{proof}
	
	The following lemma describes the structure of the dessin~${\mathcal J}$. 
	
	\begin{figure}[!ht]
		\includegraphics[trim=0pt 50pt 0pt 230pt,clip,width=1\textwidth]{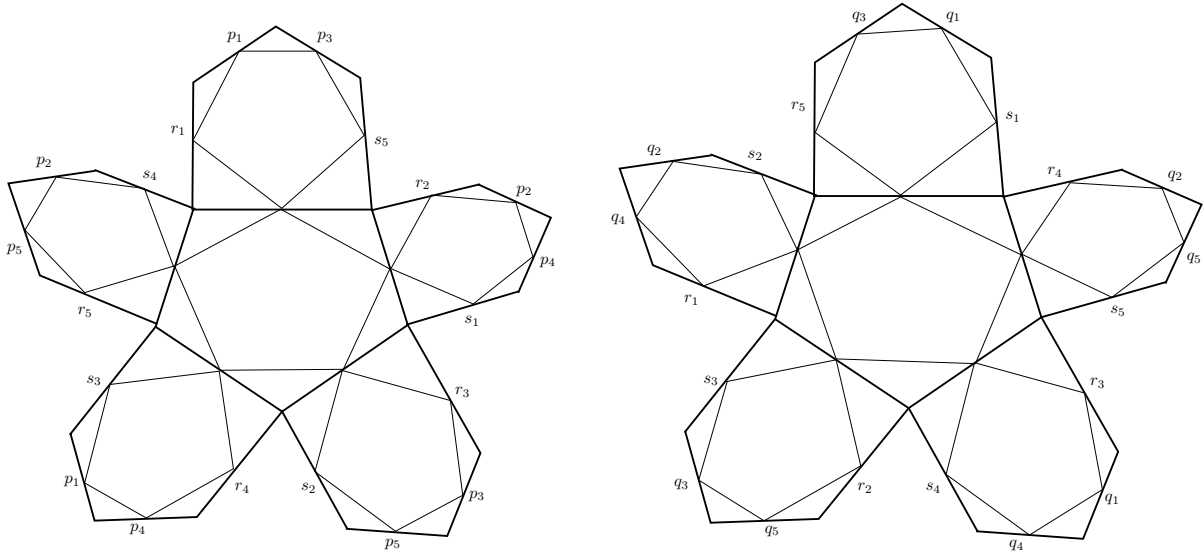} 
		\caption{Scanning of the dessins $I_4$ (dark lines) and $\mathcal J$ (light lines) from Definition~\ref{Def:J}} 	\label{F9d}
	\end{figure}
	
	\begin{lemma} \label{l1} 
		1. All white vertices of $\mathcal J$ are of valency 2.

		%\vspace{-4cm}

		%\end{definition}
		%We denote by $\mathcal J$ the dessin obtained from $(I\cup I^*)^*$ by  re-coloring of white and black vertices. It will be shown later that all white vertices of $\mathcal J$ have valency 2. We will not draw the white vertices of valency 2, except the case

		2. The dessin ${\mathcal J}$ without white vertices is shown at Figure \ref{F9d} by the light lines. It is embedded into a surface obtained from the figure drawn by the dark lines by gluing the equally marked sides. 
		
		3. The faces of the dessin   $\mathcal J$ are 24 bi-colored 10-gons. 
	\end{lemma}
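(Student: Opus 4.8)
The plan is to deduce parts~1 and~3 formally from Lemma~\ref{L0} together with the combinatorics of $I_4\cup I_4^*$ recorded in Section~4, and to obtain part~2 by transporting the ``take the dual, then re-colour'' construction onto the scanning of $I_4$ from Figure~\ref{F8a}.

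First I would record the fibre data of $f_{B_5}$. Recall that $I_4\cup I_4^*$ has $24$ black vertices of valency $5$, $30$ white vertices of valency $4$, and $60$ quadrilateral faces; equivalently, over $0$ the Belyi function $f_{B_5}$ has $24$ preimages each of multiplicity $5$, over $1$ it has $30$ preimages each of multiplicity $4$, and over $\infty$ it has $60$ preimages each of multiplicity $2$ (the last because a quadrilateral face of a bi-coloured dessin carries two black and two white boundary vertices). By Lemma~\ref{L0} the Belyi function of $\mathcal J$ is $h=1-1/f_{B_5}$, and the M\"obius map $\rho\colon x\mapsto 1-1/x$ sends $1\mapsto 0$, $\infty\mapsto 1$, $0\mapsto\infty$. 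Hence $h^{-1}(0)=f_{B_5}^{-1}(1)$ consists of $30$ points, each of multiplicity $4$ for $h$ (if $f_{B_5}-1\sim ct^4$ then $h\sim ct^4$); $h^{-1}(1)=f_{B_5}^{-1}(\infty)$ consists of $60$ points, each of multiplicity $2$ for $h$; and $h^{-1}(\infty)=f_{B_5}^{-1}(0)$ consists of $24$ points, each a pole of $h$ of order $5$. This gives part~1 immediately: every white vertex of $\mathcal J$ has valency $2$. For part~3: each face of $\mathcal J$ corresponds to a pole of $h$ of order $5$, so its boundary cycle runs through $5$ black and $5$ white vertices of $\mathcal J$ alternately, i.e.\ it is a bi-coloured $10$-gon, and there are $24$ of them. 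As a consistency check, $30\cdot4=60\cdot2=24\cdot5=120=\deg h$ and $30+60-120+24=-6=2-2\cdot4$, which reproduces genus $4$. The same data can be read off directly from Definition~\ref{Def:J}: $(I_4\cup I_4^*)^*$ has its black vertices at the centres of the quadrilateral faces of $I_4\cup I_4^*$ (valency $2$, since each such face has two white boundary vertices), its white vertices at the $30$ shared white vertices (valency $4$), and its faces around the $24$ valency-$5$ black vertices of $I_4\cup I_4^*$; re-colouring interchanges the first two families, yielding $30$ black vertices of valency $4$, $60$ white vertices of valency $2$, and $24$ faces that are $10$-gons.

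For part~2 I would superimpose $\mathcal J$ on the scanning of $I_4$ already drawn in Figure~\ref{F8a}. Since $I_4$, $I_4^*$, $I_4\cup I_4^*$ and all their duals and re-colourings live on the curve $B_5$, they are all embedded in one and the same genus-$4$ surface, so it suffices to place the vertices and edges of $\mathcal J$ correctly with respect to the combinatorics of $I_4$. Concretely, the black vertices of $\mathcal J$ sit at the midpoints of the edges of $I_4$ (the shared white vertices), the white vertices of $\mathcal J$ sit at the centres of the quadrilateral faces of $I_4\cup I_4^*$, and, by Definition~\ref{DeDu} applied to $I_4\cup I_4^*$ followed by re-colouring, each edge of $\mathcal J$ joins such a face centre to a white vertex of $I_4\cup I_4^*$ incident to that face. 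Deleting the valency-$2$ white vertices of $\mathcal J$, as in the statement, replaces every two-edge path (black of $\mathcal J$)--(white of $\mathcal J$)--(black of $\mathcal J$) by a single arc running from one midpoint of an $I_4$-edge, through a face centre of $I_4\cup I_4^*$, to another such midpoint. I would then verify that, drawn on the fundamental polygon of Figure~\ref{F8a} with its side identifications, these arcs are exactly the light curves of Figure~\ref{F9d} (the dark curves being $I_4$ itself) and that the $24$ resulting regions are the $10$-gons of part~3.

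The main obstacle is part~2: there is no shortcut past the explicit combinatorial bookkeeping --- one must track all $30$ black vertices, $60$ white vertices and $24$ faces of $\mathcal J$ through the identifications of the scanning and match them against Figure~\ref{F9d}. Parts~1 and~3, by contrast, are immediate once Lemma~\ref{L0} and the fibre data of $f_{B_5}$ are in hand.
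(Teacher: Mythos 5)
Your proposal is correct. For part 2 --- the only part that requires real work --- you take essentially the paper's route: transport the ``dual, then re-colour'' construction onto the scanning of $I_4$, place the black vertices of $\mathcal J$ at the midpoints of the $I_4$-edges and its (suppressed) valency-2 white vertices at the centres of the quadrilateral faces of $I_4\cup I_4^*$, and then match the resulting arcs against Figure~\ref{F9d}; the paper does exactly this, one face of $I_4$ at a time, via the intermediate pictures of Figure~\ref{F8b}, and, like you, it ultimately rests the identification on explicit pictorial bookkeeping rather than on a shortcut. Where you genuinely differ is in parts 1 and 3: the paper reads these off the geometric construction (the black vertices of $(I_4\cup I_4^*)^*$ have valency 2 because the faces of $I_4\cup I_4^*$ are quadrangles, and the faces of the dual surround the 24 valency-5 black vertices), whereas you compute the passport of $h=1-1/f_{B_5}$ from the passport $\bigl(5^{24}\mid 4^{30}\mid 2^{60}\bigr)$ of $f_{B_5}$ via the M\"obius map $x\mapsto 1-1/x$, using Lemma~\ref{L0}. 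The two arguments encode the same duality, but your version delivers the full passport of $\mathcal J$ in one stroke, comes with the Euler-characteristic consistency check $30+60-120+24=-6$, and makes parts 1 and 3 logically independent of the correctness of the figures --- a small but real gain in robustness over the paper's purely pictorial derivation.
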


	\begin{proof} We start from a face of $I_4$ and transform it geometrically. %!!! $I_4\cup I_4^*$ ?
		
		Below we consider one of the pentagons shown at Figure~\ref{F8a}, cf. Figure \ref{F8b}(a).  We add white vertices at the middles of edges to obtain  Figure \ref{F8b}(b) and draw the   dual dessin $I_4^*$ in bold lines, see Figure \ref{F8b}(c). Then we draw the union of the pentagon and   the part of its dual dessin located inside this pentagon as a unique bi-colored dessin, Figure \ref{F8b}(d).   Figure \ref{F8b}(e) presents the dual dessin to the dessin from Figure \ref{F8b}(d). This  dual dessin  is drawn   in light lines. Note that all black vertices of the dual dessin from Figure \ref{F8b}(d) have valency 2, since the corresponding faces are quadrangles. So, below we can and do omit the vertices of valency 2 of the dual dessin from Figure \ref{F8b}(d). Changing the color of vertices and erasing the edges of the dessin $I_4^*$ in order to simplify the picture, we obtain the graph drawn at Figure \ref{F8b}(f). Repeating this procedure with each face of $I_4$ we obtain the graph at Figure~\ref{F9d}, i.e.,~${\mathcal J}$.

		%\vspace{-6cm}
		
		\begin{figure}[!ht]
			\includegraphics[trim=0pt 80pt 0pt 420pt,clip,width=1\textwidth]{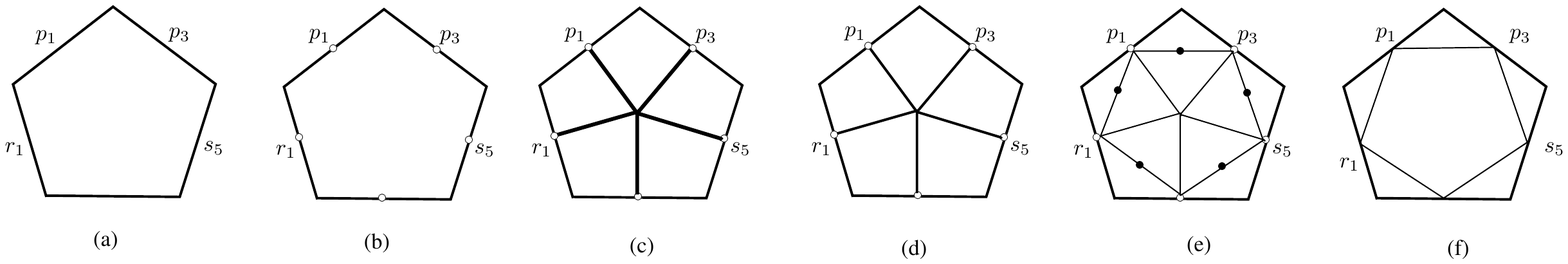} 
			\caption{Transition from a face of $I_4$ to a face of  $\mathcal J$} \label{F8b}
		\end{figure} 
	\end{proof}

	The main goal of this section is to show that the dessin $\mathcal J$   is isomorphic to the dessin $\mathcal D$ shown at Figure \ref{fig5_5}. We are going to do this within a series of lemmas step by step showing how we plan to cut and glue the scanning of $\mathcal J$ to obtain the required figure.

	\begin{lemma}
		Figure~\ref{F11} represents the scanning of the dessin $\mathcal J$. %, i.e., the sides marked by the same indices are to be glued.
		\label{l2}
	\end{lemma}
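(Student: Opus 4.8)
The plan is to derive Figure~\ref{F11} from the scanning of $\mathcal J$ already exhibited in Figure~\ref{F9d}, via a finite sequence of elementary cut-and-paste moves, and to check that no combinatorial data is lost along the way.

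First I would recall what a scanning encodes. A planar figure consisting of finitely many labelled polygons together with a pairing of their boundary edges determines, by gluing the paired edges, a closed oriented surface with an embedded bi-coloured graph; conversely, two such figures yield the same dessin exactly when one can be obtained from the other by the elementary moves ``cut the figure along one pair of identified edges'' and its inverse ``glue the figure along one pair of identified edges.'' Hence it suffices to identify, in the target Figure~\ref{F11}, which of the edge identifications of Figure~\ref{F9d} have been turned into interior (glued) edges and which remain on the boundary, and then to realise that regrouping of the $24$ ten-gonal faces as an explicit sequence of such gluings starting from Figure~\ref{F9d}.

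Carrying this out, I would assemble the $24$ faces of Figure~\ref{F9d} into the connected planar region of Figure~\ref{F11} by gluing them, one paired edge at a time, along the chosen interior edges; at each step one verifies that the cyclic order of the (at most four) edges around each black vertex and of the two edges around each white vertex lying on the new seam is respected, so that the embedded graph --- in particular its face structure, its $30$ black and $60$ white vertices, and the genus $4$ --- is unchanged. Once all faces have been assembled, one reads the cyclic words of edge-labels along the boundary of the resulting region and checks that the induced boundary pairing agrees with the one drawn in Figure~\ref{F11}. Since Figures~\ref{F9d} and~\ref{F11} then agree on all three pieces of data --- the set of faces, the boundary words, and the edge pairing --- they are scannings of one and the same dessin $\mathcal J$.

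The conceptual content here is slight; the real work, and the main obstacle, is the bookkeeping: tracking the $120$ edges and the $90$ vertices through the regrouping so that no identification is dropped, reversed, or mismatched. This is best organised using symmetry. By Definition~\ref{Def:J} the dessin $\mathcal J$ carries the orientation-preserving $S_5$-action inherited from $I_4\cup I_4^*$, and a cyclic subgroup $\mathbb Z_5$ already acts on the scanning of $I_4$ in Figure~\ref{F8a} by the opposite-direction rotations noted after Lemma~\ref{LemI4}; propagating this action through the local transformation of Lemma~\ref{l1} (Figure~\ref{F8b}) shows that Figure~\ref{F9d}, and likewise Figure~\ref{F11}, decomposes into five copies of a fundamental sector glued with a fixed shift. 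It is then enough to perform and verify the cut-and-paste on a single sector and to extend it by the $\mathbb Z_5$-symmetry, which cuts the bookkeeping by a factor of five and makes the verification routine.
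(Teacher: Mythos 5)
Your proposal is correct and takes essentially the same route as the paper: the paper's proof is precisely an explicit cut-and-paste from Figure~\ref{F9d} to Figure~\ref{F11} (cutting three triangles from each of the ten outer pentagons, relabelling the new cuts $x_i,y_i,u_i,v_i,z_i,t_i$, and regluing along $p_j,q_j,r_j,s_j$), with the edge identifications tracked face by face. Your additional suggestion to organise the bookkeeping via the $\mathbb{Z}_5$-rotation of the scanning is consistent with the paper's figures, whose cuts are chosen equivariantly, and is a reasonable way to compress the verification; the only thing you leave implicit is the actual choice of cuts, which is the content of the paper's Figures~\ref{F9d1}--\ref{F11b}.
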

	\begin{proof} 
		We show how we cut and glue some pieces of the dessin at Figure \ref{F9d} in order to obtain the picture at Figure~\ref{F11}. We cut three hatched triangles shown at Figure \ref{F9d1} from each of 10 outside pentagons and glue them by the lines marked by the same symbols. We denote the obtained new cuts by the symbols $x_i,y_i,u_i, v_i,z_i,t_i$, $i=1,2,3,4,5$, as it is shown at Figure~\ref{F9d1}. Note that there is a mirror symmetry  $x_i \leftrightarrow y_i$, $v_i \leftrightarrow z_i$, and $u_i \leftrightarrow t_i$.

		%\vspace{-2cm}
		
		\begin{figure}[!h]
			\includegraphics[keepaspectratio=true, trim=0pt 0pt 0pt 230pt,clip,width=1\textwidth]{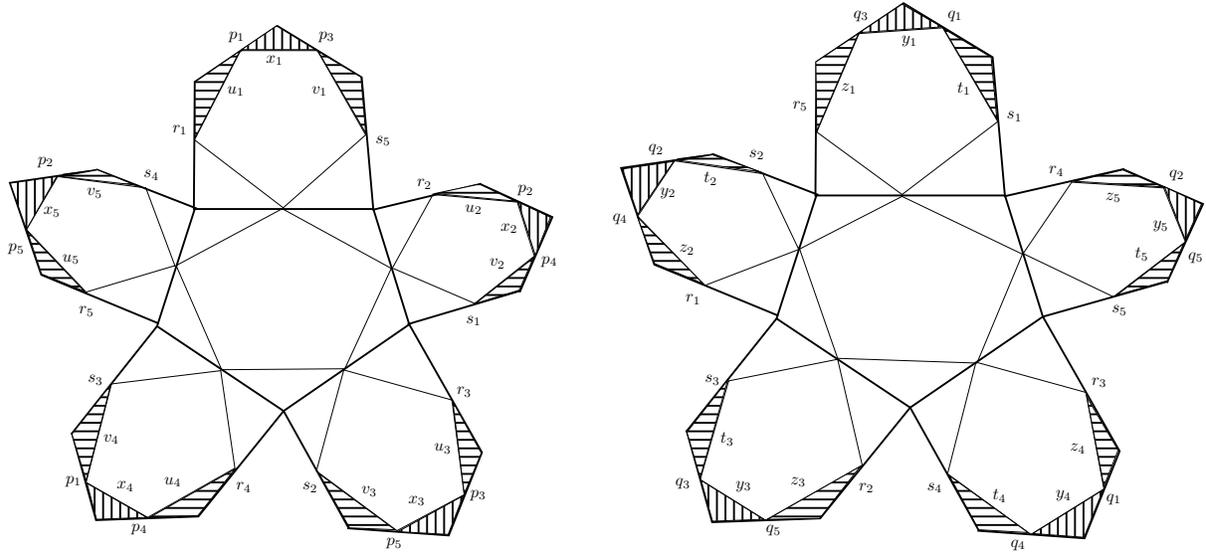} 
			\caption{The triangles for cutting off at the dessin $\mathcal J$} \label{F9d1}
		\end{figure}

		Then the horizontally hatched parts on the left hand side of  Figure~\ref{F9d1} should be   glued to became the  horizontally hatched parts  on the right hand side of    Figure~\ref{F9e} and vice versa. The vertically hatched parts should be   glued to became the pentagons at Figure~\ref{F11b}.

		\begin{figure}[!h]
			\includegraphics[trim=0pt 0pt 0pt 265pt,clip,width=1\textwidth]{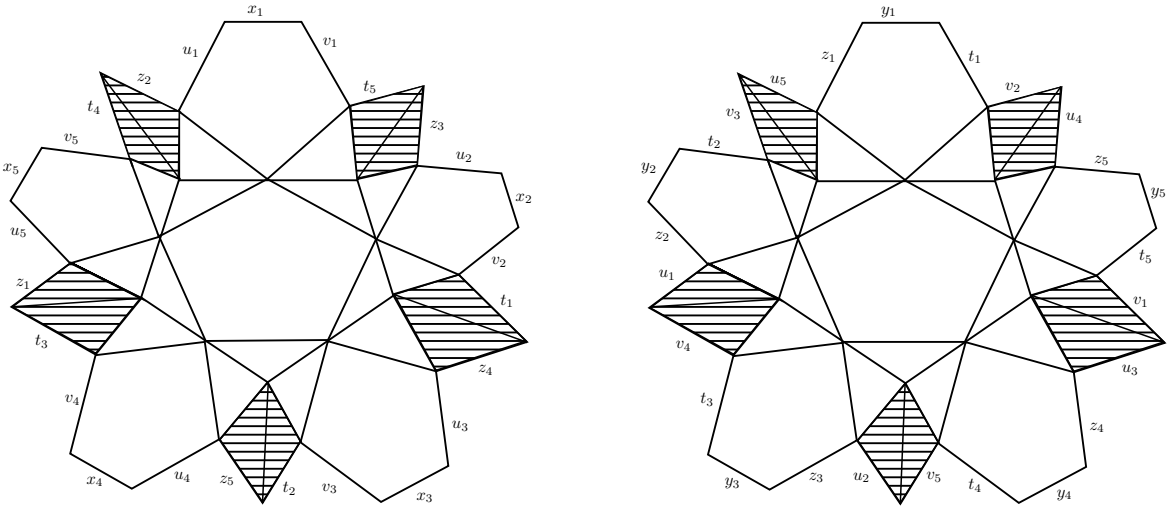} 
			\caption{Gluing of horizontally hatched parts of $\mathcal J$} 	\label{F9e}
		\end{figure}
		
		In particular, the triangle marked by $r_1, p_1, u_1$ on the left hand side of Figure~\ref{F9d1} became the upper half of the crossed spine marked by $u_1$ and $v_4$ on the right hand side of Figure~\ref{F9e}. Correspondingly, the lower half of this spine arises from the triangle marked by $p_1,s_3,v_4$  on the left hand side of Figure~\ref{F9d1}. In the spine these two triangles are glued by the edge marked with~$p_1$. 
		
		\begin{figure}[!h]
			\centering
			\includegraphics[trim=0pt 150pt 0pt 180pt,clip,width=0.6\textwidth]{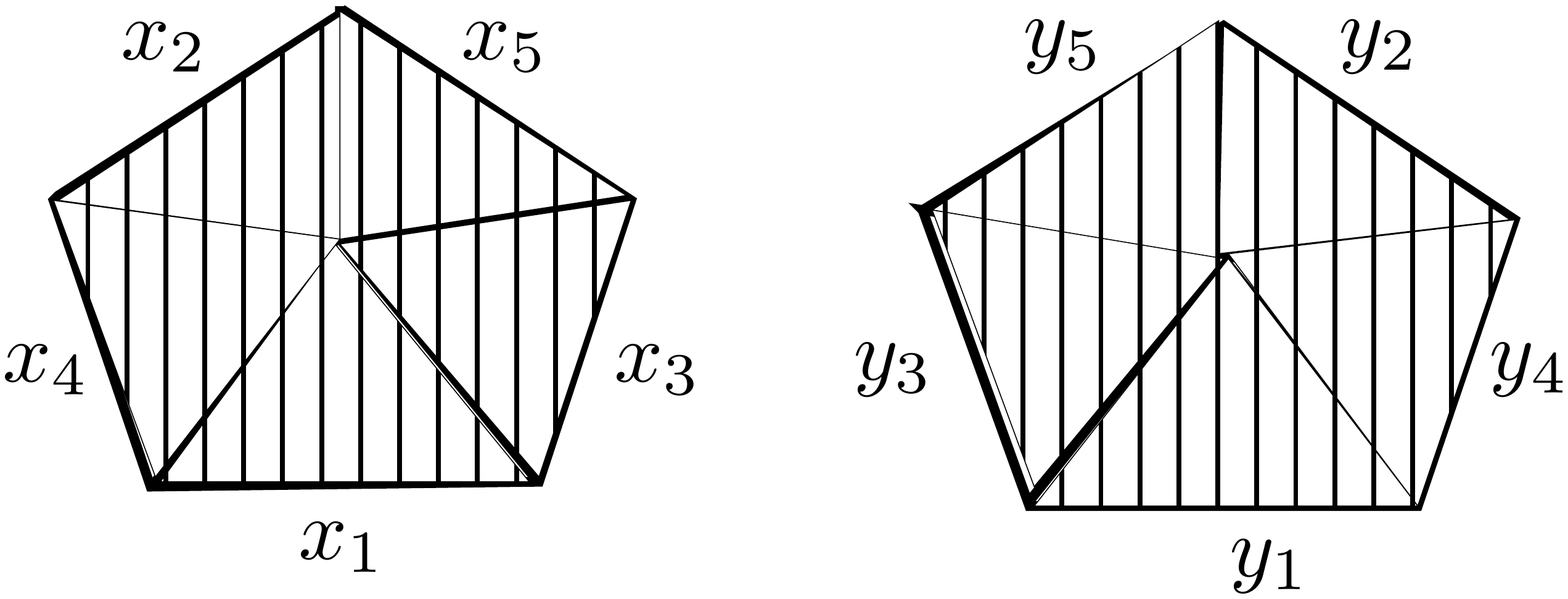} 
			\caption{Gluing of vertically hatched parts of $\mathcal J$} 	\label{F11b}
		\end{figure}
		
		The vertically hatched parts on the left hand side part of Figure~\ref{F9d1} should be cut of by the edges marked by $x_i$, $i=1,\ldots, 5$, and glued by the edges marked by $p_i$, $i=1,\ldots, 5$,    in order to obtain the left hand side pentagon at Figure~\ref{F11b}. Similarly, we cut vertically hatched triangles on the right hand side part of Figure~\ref{F9e} by the edges marked by $y_i$, $i=1,\ldots, 5$, and glued by the edges marked by $q_i$, $i=1,\ldots, 5$,  in order to obtain the right hand side pentagon at Figure~\ref{F11b}.
		
		\begin{figure}[htbp]
			\includegraphics[trim=0pt 80pt 0pt 100pt,clip,width=1\textwidth]{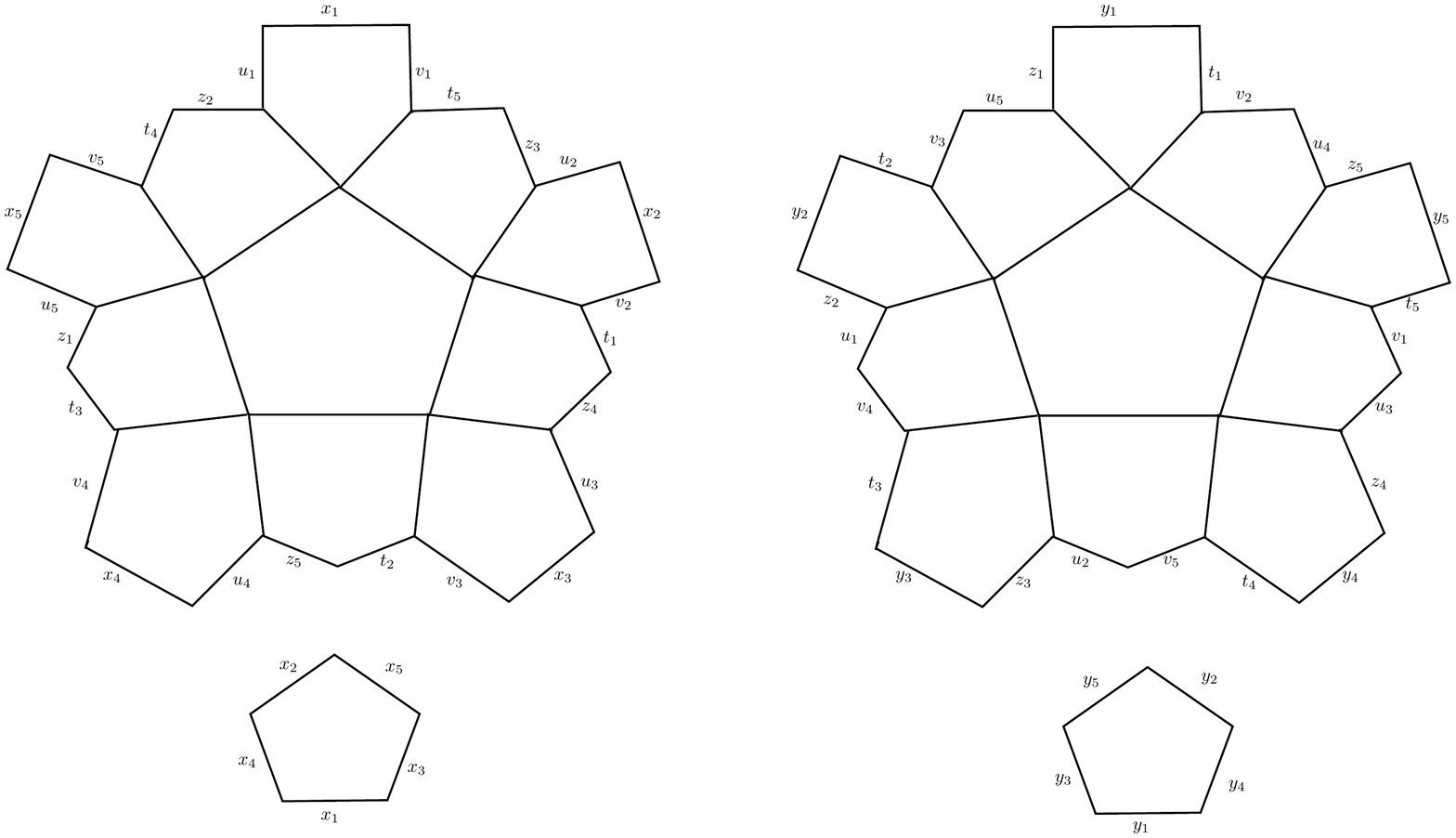} 
			
			\caption{The dessin $\mathcal J$} \label{F11}
		\end{figure}
		
		The new figure obtained from the scanning at Figure~\ref{F8a} by cutting the edges marked by $x_i,y_i,u_i, v_i,z_i,t_i$, $i=1,2,3,4,5$, and gluing the edges marked by $p_j,q_j,r_j,s_j$, $j=1,2,3,4,5$, is drawn at Figure~\ref{F11}. So, this is another gluing the dessin $\mathcal J$. We do not draw the edges of $I_4$ any more.
	\end{proof}
	
	\begin{lemma} \label{l3}
		The dessin $\mathcal J$ is isomorphic to $\mathcal D$.
	\end{lemma}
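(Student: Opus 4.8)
The plan is to establish the isomorphism $\mathcal{J} \cong \mathcal{D}$ by matching the combinatorial data of the two dessins, using the scanning of $\mathcal{J}$ produced in Lemma~\ref{l2} (Figure~\ref{F11}) against the scanning of $\mathcal{D}$ displayed in Figure~\ref{fig5_5}. Recall that an isomorphism of dessins d'enfants is equivalent to an isomorphism of the associated bi-colored ribbon graphs, i.e.\ a colour- and orientation-preserving bijection of edges commuting with the cyclic rotations at black and at white vertices. Since all white vertices of $\mathcal{J}$ have valency $2$ by Lemma~\ref{l1}(1) and the same is true for $\mathcal{D}$, the white vertices carry no rotation data and the problem reduces to matching the black-vertex rotation system on $120$ edges, together with the face structure. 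First I would record the numerical coincidence: by Lemma~\ref{l1}(3) the dessin $\mathcal{J}$ has $24$ ten-gonal faces, and its Euler characteristic, computed from $30$ black vertices of valency~$4$ (the black vertices of $(I_4\cup I_4^*)^*$ are the face centers of $I_4\cup I_4^*$, which are the $60$ quadrilateral faces paired up — more precisely one checks $\mathcal{J}$ has $30$ black vertices of valency~$4$, $60$ white vertices of valency~$2$, $120$ edges, $24$ faces), gives $\chi = 30+60+24-120 = -6$, hence genus $4$ — exactly the data of $\mathcal{D}$ from the lemma at the end of Section~3.2. So the two dessins have the same type and the same genus.

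The substantive step is to see that the \emph{specific gluing patterns} coincide. The strategy is purely pictorial-combinatorial: the scanning in Figure~\ref{F11} was obtained from the scanning of $I_4$ in Figure~\ref{F8a} by cutting along the new edges $x_i,y_i,u_i,v_i,z_i,t_i$ and regluing along $p_j,q_j,r_j,s_j$; I would compare this, side by side, with the explicit gluing of $24 = 12 \cdot 2$ pentagons that defines $\mathcal{D}$ in Figure~\ref{fig5_5}. Concretely, the plan is to exhibit a bijection between the $24$ ten-gonal faces of $\mathcal{J}$ and the $24$ pentagonal cells of the cell decomposition of $\mathcal{L}(\overline{\mathcal{M}_{0,5}^{\mathbb{R}}})$ (each pentagon of $\mathcal{D}$ becomes a $10$-gon after inserting white midpoints, matching the $10$-gon faces of $\mathcal{J}$), and to check that under this bijection adjacent faces glue along edges in the same cyclic fashion. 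The $S_5$-symmetry is the organizing principle: by Theorem~\cite[Proposition 5.6]{Zv} the automorphism group of $I_4\cup I_4^*$ is $S_5$, hence so is that of its dual and of $\mathcal{J}$; on the other side Lemma~\ref{Sn} gives a faithful orientation-preserving $S_5$-action on $\mathcal{L}(\overline{\mathcal{M}_{0,5}^{\mathbb{R}}})$ permuting the $12$ maximal cells (hence the $24$ oriented cells) by permuting the marked-point labels. One checks that the $12$ black vertices of $I_4$ (equivalently the $12$ faces of $I_4 \cup I_4^*$ grouped appropriately) are in $S_5$-equivariant bijection with the $12$ unordered ways of cutting a pentagon-with-labels into two pieces (the $\binom{5}{2}=10$ diagonals plus $\ldots$ — in fact with the $12$ maximal cells), so that a single orbit computation pins down the whole structure: once one face of $\mathcal{J}$ is matched with one cell of $\mathcal{D}$ compatibly with a common point of the $S_5$-action, equivariance propagates the match to all $24$ faces and all $120$ edges.

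In executing this I would (i) fix a base ten-gonal face $F_0$ of $\mathcal{J}$ in Figure~\ref{F11} and read off the cyclic sequence of black vertices and edges around its boundary; (ii) fix the base pentagon of $\mathcal{D}$ — the cell marked $(1,2,3,4,5)$ of Figure~\ref{Klgr0a}, doubled with one orientation — insert white midpoints, and read off its boundary $10$-cycle; (iii) identify the $S_5$-stabilizers of $F_0$ and of the base cell and check they agree as subgroups of $S_5$ (each should be a dihedral group of order $10$, generated by the cyclic relabeling $(1\,2\,3\,4\,5)$ and a reflection); (iv) define the candidate isomorphism on the orbit of the base edge by transporting via group elements, and verify it is well-defined (independent of the chosen group element, i.e.\ the edge stabilizers match) and that it intertwines the black-vertex rotations; (v) conclude by counting that the orbit is everything. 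The main obstacle I anticipate is bookkeeping step (iii)--(iv): matching the edge and vertex stabilizers requires tracking which \emph{twists} (in the sense of Construction~\ref{l:oMn_cell_dec}) correspond to which identifications $p_j,q_j,r_j,s_j$ in Figure~\ref{F11}, and a sign/orientation error here would be invisible in the face count but fatal to the isomorphism; the doubling with opposite orientations in the definition of $\mathcal{L}(\overline{\mathcal{M}_{0,5}^{\mathbb{R}}})$ must be handled carefully so that the orientation-preserving $S_5$ of Lemma~\ref{Sn}, rather than the orientation-mixing $S_5$ of Lemma~\ref{SnD}, is the one that matches $\operatorname{Aut}$ on the $I_4$ side. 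Once the base-face correspondence and its stabilizer are verified, the rest is forced.
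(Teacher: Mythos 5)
Your numerical bookkeeping is correct ($\mathcal J$ and $\mathcal D$ both have $30$ black vertices of valency $4$, $60$ white vertices of valency $2$, $120$ edges and $24$ ten-gonal faces on a genus $4$ surface), and your overall strategy --- identify the two dessins as $S_5$-equivariant objects, match a base face and its stabilizer, and propagate by equivariance --- is a genuinely different route from the paper's. The paper proves Lemma~\ref{l3} by a direct cut-and-paste manipulation of scannings: starting from Figure~\ref{F11} (the output of Lemma~\ref{l2}), it glues the $x_1$- and $y_1$-sides to obtain Figure~\ref{F9}, then cuts along the indicated dark lines and reglues along $x_5,t_3,v_4$ and $y_5,u_3,z_4$ to reproduce verbatim the two polygons of Figure~\ref{fig5_5}, i.e.\ the scanning of $\mathcal D$. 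No group theory is invoked.

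The difficulty with your plan as written is that the decisive steps are announced but not performed, and they are not mere bookkeeping --- they are the entire content of the lemma. Agreement of combinatorial type, genus and symmetry group does not determine a dessin: a regular dessin with automorphism group $S_5$ is determined by a generating pair (the local rotations at a base black and white vertex) only up to $\mathrm{Aut}(S_5)$, so you must actually exhibit a matched base flag and verify that the two rotation systems correspond, which is your step (iv) and is left undone. Worse, the propagation argument presupposes that the $S_5$-action of Lemma~\ref{Sn} on $\mathcal D$ is free and transitive on the $120$ edges, i.e.\ that $\mathcal D$ is regular; but in the paper the regularity of $\mathcal D$ is deduced as a \emph{corollary} of Lemma~\ref{l3}, so you would need to establish it independently (e.g.\ by computing the stabilizer of an oriented pentagon in the orientation-preserving action and checking it is $\mathbb Z_5$ acting freely on the boundary edges), and this is nowhere carried out. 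Until steps (iii)--(v) are executed --- including the orientation subtlety you yourself flag, namely that it is the twisted action of Lemma~\ref{Sn} and not that of Lemma~\ref{SnD} which must match $\mathrm{Aut}(I_4\cup I_4^*)$ --- the argument establishes only that $\mathcal J$ and $\mathcal D$ are two dessins of the same type admitting $S_5$-actions, which is not yet an isomorphism.
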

	
	\begin{proof}
		
		Figure \ref{F11} contains the scanning of the dessin $\mathcal J$. The same letters denote the sides to be glued. Gluing   two sides marked by $x_1$ to each other and two sides marked by $y_1$ to each other we obtain Figure~\ref{F9}.
		
		\begin{figure}[htbp]%!ht]
			\includegraphics[trim=0pt 70pt 0pt 122pt,clip,width=1\textwidth]{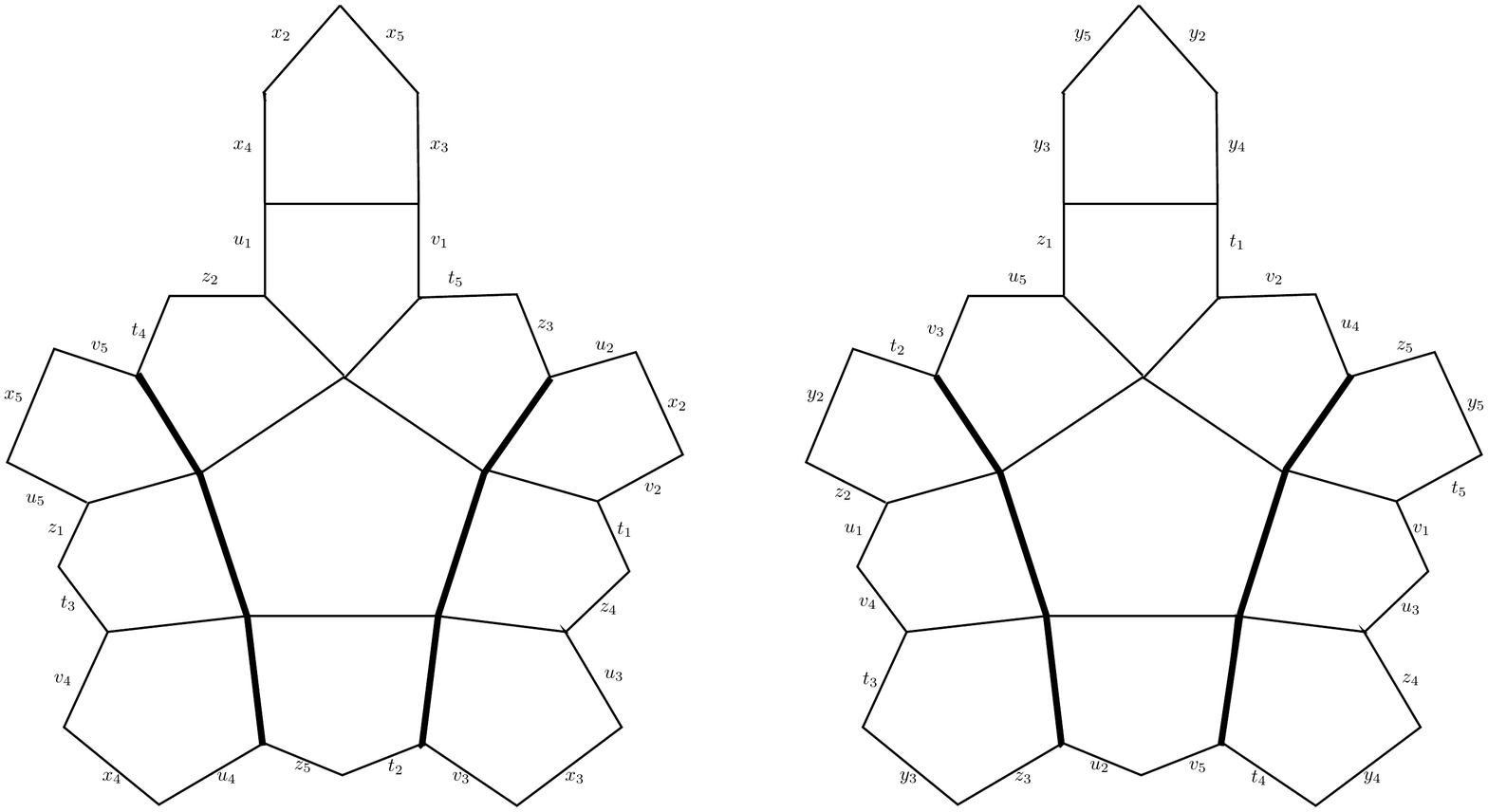} 
			\caption{Another scanning of the dessin   $\mathcal J$ with the lines to cut off.} 	\label{F9}
		\end{figure}
		
		Let us cut  the polygons at Figure \ref{F9} by the dark lines. Then we glue the sides marked by the letters $x_5$ to each other,  the sides marked by the letters $t_3$ to each other, and the sides marked by the letters $v_4$ to each other. Thus we obtain the upper polygon from Figure \ref{fig5_5}. Now let us glue the sides marked by the letters  $y_5$,  the sides marked by the letters $u_3$, and the sides marked by the letters $z_4$. Thus we obtain the lower polygon from Figure~\ref{fig5_5}.  
	\end{proof}
	
	\begin{lemma} \label{Main}
		The dessin ${\mathcal D}$ corresponding to the orientation covering of $\overline{{\mathcal M}_{0,5}^{\mathbb R}}$ is the preimage of $[1,\infty)$ under the map $f_{B_5}:B_5\to \bP$. Here preimages of $\infty$ are middle points of the edges.
	\end{lemma}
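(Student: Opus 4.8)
The plan is to deduce this final statement formally from Lemmas~\ref{l3} and~\ref{L0} together with an elementary remark on the M\"obius transformation $\rho(x)=1-1/x$, so that essentially no new computation is required.

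First I would invoke Lemma~\ref{l3}: since $\mathcal{D}$ and $\mathcal{J}$ are isomorphic dessins d'enfants, they correspond to the same Belyi pair, and by Lemma~\ref{L0} this pair is $(B_5,\,1-1/f_{B_5})$. By the dessin--Belyi correspondence recalled in Section~4, the dessin $\mathcal{J}$, hence $\mathcal{D}$, is realized up to isomorphism as the preimage $(1-1/f_{B_5})^{-1}([0,1])\subset B_5$, with black vertices over $0$, white vertices over $1$, and edges over the open interval $(0,1)$.

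Next I would use, exactly as in the proof of Lemma~\ref{L0}, that $\rho(x)=1-1/x$ carries the closed arc $[1,\infty]\subset\bP$ bijectively onto the segment $[0,1]$, with $\rho(1)=0$ and $\rho(\infty)=1$ (monotonicity on $[1,\infty]$ follows from $\rho'(x)=1/x^2>0$). Therefore $(1-1/f_{B_5})^{-1}([0,1]) = f_{B_5}^{-1}\bigl(\rho^{-1}([0,1])\bigr) = f_{B_5}^{-1}([1,\infty])$, which identifies $\mathcal{D}$ with the preimage of $[1,\infty)$ under $f_{B_5}$, the point $\infty$ being adjoined to the arc to account for the extra vertices. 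Finally, the white vertices of $\mathcal{J}$ are $(1-1/f_{B_5})^{-1}(\{1\})=f_{B_5}^{-1}(\{\infty\})$; under the isomorphism of Lemma~\ref{l3} these are exactly the white vertices of $\mathcal{D}$, which by construction sit at the midpoints of the edges of the cell decomposition of $\overline{{\mathcal M}_{0,5}^{\mathbb R}}$ lifted to its orientation cover. This yields the parenthetical claim about preimages of $\infty$.

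There is no genuine obstacle in this last lemma: it is a bookkeeping statement recording which arc of $\bP$ one pulls back, and the only point deserving a little care is checking that the white, valency-$2$ vertices of $\mathcal{J}$ described in Lemma~\ref{l1} match the edge-midpoint white vertices of $\mathcal{D}$ — but this is precisely the content of the isomorphism $\mathcal{J}\cong\mathcal{D}$ already established in Lemma~\ref{l3}. All the substantive difficulty of the paper is concentrated upstream — in Zvonkin's computation of $f_{B_5}$ and in the explicit cut-and-paste argument proving Lemma~\ref{l3} — rather than here.
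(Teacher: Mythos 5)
Your proposal is correct and follows essentially the same route as the paper: cite Lemma~\ref{l3} to identify $\mathcal D$ with $\mathcal J$, cite Lemma~\ref{L0} for the Belyi pair $(B_5,\,1-1/f_{B_5})$, and unwind the M\"obius map $x\mapsto 1-1/x$ sending $[1,\infty]$ to $[0,1]$ to translate this into the preimage statement for $f_{B_5}$, with $f_{B_5}^{-1}(\infty)$ giving the white (edge-midpoint) vertices. You merely spell out explicitly the arc bookkeeping that the paper's own one-line proof leaves implicit.
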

	\begin{proof}
		By Lemma   \ref{l3} the dessin $\mathcal J$ is isomorphic to the dessin $\mathcal D$. By its definition, ${\mathcal J}$ %Lemma \ref{l1}
		is obtained from the dessin $(I_4\cup I_4^*)^*$ by   the color exchange and forgetting the vertices of valency 2.  Lemma~\ref{L0} concludes the proof.
	\end{proof}

	\begin{theorem}
		The Belyi pair of the dessin d'enfant ${\mathcal D}$ provided by the cell decomposition of ${\mathcal L}(\overline{{\mathcal M}_{0,5}^{\mathbb R}})$  is $$\Bigl(B_5,1-\frac1{f_{B_5}}\Bigr)= \Bigl(B_5,  \frac{3125 (\frac1{x_1}+\frac1{x_2}+ \frac1{x_3}+\frac1{x_4} +\frac1{x_5})^4}{256\, x_1x_2x_3x_4x_5} \Bigr).$$
	\end{theorem}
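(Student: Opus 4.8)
The plan is to separate the two equalities in the statement. For the first --- that the Belyi pair of $\mathcal D$ is $(B_5,1-1/f_{B_5})$ --- almost everything is already in place: by Lemma~\ref{l3} the dessin $\mathcal D$ arising from the cell decomposition of ${\mathcal L}(\overline{{\mathcal M}_{0,5}^{\mathbb R}})$ is isomorphic to $\mathcal J$, and by Lemma~\ref{L0} the Belyi pair of $\mathcal J$ is $(B_5,1-1/f_{B_5})$; since isomorphic dessins share a Belyi pair, the first equality follows. Equivalently, I would argue straight from Lemma~\ref{Main}: $\mathcal D$ is the preimage $f_{B_5}^{-1}([1,\infty])$ with the points of $f_{B_5}^{-1}(\infty)$ serving as the midpoints of edges, and post-composing with the Möbius map $\rho:t\mapsto 1-1/t$, which carries $[1,\infty]$ onto $[0,1]$ sending $1\mapsto 0$ and $\infty\mapsto 1$, converts this into the standard preimage description of a dessin d'enfant for the Belyi function $1-1/f_{B_5}$ (black vertices over $0$, white vertices over $1$, face centres over $\infty$).

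For the second equality it remains to express $1-1/f_{B_5}$ as a rational function of $x_1,\dots,x_5$. First I would simplify abstractly: from $f_{B_5}=\dfrac{256a^5}{256a^5+3125b^4}$ one gets $1-\dfrac1{f_{B_5}}=-\dfrac{3125\,b^4}{256\,a^5}$; a handy sanity check is that the denominator $256a^5+3125b^4$ is, up to sign, the discriminant of $x^5+ax+b$, so $f_{B_5}$ is the ratio of the ``$a$-part'' of the discriminant to the full discriminant. Next I would insert the Vieta relations for the roots: expanding $\prod_{i=1}^5(x-x_i)=x^5+ax+b$ gives $a=e_4(x_1,\dots,x_5)$ and $b=-e_5(x_1,\dots,x_5)$, and the key observation is $e_4=\sum_i\prod_{j\ne i}x_j=e_5\cdot\sum_i\frac1{x_i}$, so that $a=x_1x_2x_3x_4x_5\bigl(\tfrac1{x_1}+\dots+\tfrac1{x_5}\bigr)$ and $b=-x_1x_2x_3x_4x_5$. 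Substituting these into $-3125b^4/(256a^5)$ and cancelling the shared powers of $x_1x_2x_3x_4x_5$ yields the displayed expression; here one must keep in mind that $f_{B_5}$, and hence $1-1/f_{B_5}$, is invariant under the projective rescaling $(a,b)\mapsto(\lambda^4a,\lambda^5b)$, so the cancellation is legitimate and the outcome is a genuine function on $B_5$.

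I do not expect a serious obstacle at this stage, because the weight of the argument sits in the earlier cut-and-paste lemmas (Lemmas~\ref{l2} and~\ref{l3}, which identify the scanning of $\mathcal D$ in Figure~\ref{fig5_5} with that of $\mathcal J$) together with Zvonkin's computation of the Belyi pair of $I_4\cup I_4^*$. The only points that require care for the present theorem are bookkeeping ones: first, pinning down the vertex--colour convention so that the Möbius transformation relating $f_{B_5}$ to the Belyi function of $\mathcal D$ is exactly $t\mapsto 1-1/t$ --- and not $t\mapsto 1/t$ or $t\mapsto 1/(1-t)$ --- which is precisely what the analysis of $\rho$ in the proof of Lemma~\ref{L0} secures; and second, carrying the projective weights of $a$ and $b$ correctly through the concluding symmetric-function computation.
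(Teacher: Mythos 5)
Your treatment of the first equality coincides with the paper's own proof, which consists precisely of invoking Lemma~\ref{Main} (that $\mathcal D\cong\mathcal J$ is the preimage of $[1,\infty]$ under $f_{B_5}$) and Lemma~\ref{L0} (that post-composing with $\rho:t\mapsto 1-1/t$ turns this into the standard dessin of $1-1/f_{B_5}$). That part is fine and is essentially all the paper says.

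The problem is in your concluding symmetric-function computation, where you assert that substituting the Vieta relations into $-3125b^4/(256a^5)$ ``yields the displayed expression.'' It does not. Your intermediate steps are correct: $1-1/f_{B_5}=-\frac{3125\,b^4}{256\,a^5}$, and with $e_5=x_1\cdots x_5$, $s=\sum_i 1/x_i$ one has $a=e_4=e_5 s$ and $b=-e_5$. But then
\begin{equation*}
-\frac{3125\,b^4}{256\,a^5}=-\frac{3125\,e_5^4}{256\,e_5^5 s^5}=-\frac{3125}{256\,x_1x_2x_3x_4x_5\,\bigl(\frac1{x_1}+\dots+\frac1{x_5}\bigr)^5},
\end{equation*}
which is not the displayed $\frac{3125\,(\sum_i 1/x_i)^4}{256\,x_1x_2x_3x_4x_5}$. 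The discrepancy is detected by exactly the sanity check you yourself prescribe: under $x_i\mapsto\lambda x_i$ the correct expression is homogeneous of degree $0$ (as any function on the projective curve $B_5$ must be), whereas the displayed right-hand side is homogeneous of degree $-9$ and therefore cannot be a function on $B_5$ at all. So your claim that the cancellation produces the stated formula is false; had you carried the weights through as you promised, you would have found that the explicit formula in the theorem's display is itself in error (the factor $(\sum 1/x_i)$ belongs in the denominator to the fifth power, and a sign is missing). The paper's one-line proof never performs this verification, so it does not resolve the issue either; but a proof that endorses the displayed identity must either carry out the substitution correctly (and thereby contradict the display) or flag the display as a misprint.
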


	%${\mathcal L}(\overline{{\mathcal M}_{0,5}^{\mathbb R}})$ $ \overline{{\mathcal M}_{0,5}^{\mathbb R}}^{{\rm or}}$
	
	\begin{proof} The direct consequence of Lemma~\ref{L0} applied to the result of Lemma~\ref{Main}. %We remark that the transformation $\rho:  x\to 1-1/x$ maps the interval $[1,\infty)$ to the segment $[0,1]$ moving $1 \to 0$ and $\infty \to 1$. Thus by Theorem \ref{Main} the composition   $\rho \circ f_{B_5}$ maps the dessin ${\mathcal D}$ to $[0,1]$. Coincidence of the degrees proves that  $(B_5,1-\frac1{f_{B_5}})$ is Belyi pair for ${\mathcal D}$. 
	\end{proof}
	
	%As a corollary we obtain another proof of the fact that $S_5$ acts on~${\mathcal L}(\overline{{\mathcal M}_{0,5}^{\mathbb R}})$.

	\begin{corollary}
		There exists an   action of symmetric group $S_5$ on
		${\mathcal L}(\overline{{\mathcal M}_{0,5}^{\mathbb R}})$ such that
		all elements of $S_5$ act as orientation preserving diffeomorphisms respectful to cell decomposition. 
	\end{corollary}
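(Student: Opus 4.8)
The plan is to read the corollary off the theorem just proved, together with Zvonkin's description of the symmetries of $I_4\cup I_4^*$. By Lemma~\ref{Main} the surface ${\mathcal L}(\overline{{\mathcal M}_{0,5}^{\mathbb R}})$ carrying the cell decomposition is analytically the Bring curve $B_5$, and by the theorem the refined graph ${\mathcal D}$ (the cell decomposition with a white vertex placed at the midpoint of every $1$-cell) is the dessin of the Belyi pair $(B_5,\,g)$ with $g=1-1/f_{B_5}$. Under the equivalence between dessins and Belyi pairs (\cite{ShabVoev}) the colour- and orientation-preserving automorphism group of ${\mathcal D}$ coincides with $\operatorname{Aut}(B_5,g):=\{\varphi\in\operatorname{Aut}(B_5):g\circ\varphi=g\}$, and every such $\varphi$, being an automorphism of the algebraic curve $B_5$, is a biholomorphic, hence orientation-preserving, diffeomorphism of the surface. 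It therefore suffices to identify this group and to check that its elements are cellular.

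For the first point, write $g=m\circ f_{B_5}$ with $m(x)=1-1/x$ a M\"obius transformation; since $m$ is injective, the condition $g\circ\varphi=g$ is equivalent to $f_{B_5}\circ\varphi=f_{B_5}$, so $\operatorname{Aut}(B_5,g)=\operatorname{Aut}(B_5,f_{B_5})$. Now $(B_5,f_{B_5})$ is the Belyi pair of the dessin $I_4\cup I_4^*$, whose automorphism group is the symmetric group $S_5$ by \cite[Proposition~5.6]{Zv} (recalled above). For the second point, recall that ${\mathcal D}$ is obtained from the cell decomposition of ${\mathcal L}(\overline{{\mathcal M}_{0,5}^{\mathbb R}})$ by adjoining the midpoints of the $1$-cells as valency-$2$ white vertices. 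Hence a colour-preserving automorphism of ${\mathcal D}$ permutes the valency-$4$ black vertices (these are exactly the $0$-cells), permutes the white vertices and therefore the $1$-cells (each $1$-cell is the pair of ${\mathcal D}$-edges meeting at its white midpoint), and permutes the faces of ${\mathcal D}$ (these are exactly the $2$-cells); that is, it is a diffeomorphism respecting the cell decomposition. Combining the two points furnishes the desired orientation-preserving, cell-decomposition-preserving action of $S_5$.

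I do not expect a serious obstacle: the argument is a chain of soft reductions whose only substantive inputs are the theorem just proved, the cited automorphism computation for $I_4\cup I_4^*$, and the dessin/Belyi dictionary. The one point worth spelling out is the insensitivity of $\operatorname{Aut}(B_5,\beta)$ to post-composition of $\beta$ with a M\"obius transformation, which is immediate. As an alternative one may avoid the theorem entirely: the orientation-preserving $S_5$-action produced in Lemma~\ref{Sn} is the composite of the lift of the marked-point permutation action on $\overline{{\mathcal M}_{0,5}^{\mathbb R}}$, which merely relabels the sides of the pentagons and so is cellular, with the deck involution $\pi$, which interchanges the two copies of each top-dimensional cell and so is also cellular; that action therefore already has the asserted property.
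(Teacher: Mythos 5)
Your proposal is correct and follows essentially the same route as the paper: the paper likewise deduces the action from the identification of ${\mathcal D}$ with the Belyi pair on the Bring curve, realizing $S_5$ explicitly as coordinate permutations of $(x_1:\ldots:x_5)$ (which visibly preserve the symmetric function $f_{B_5}$) rather than quoting Zvonkin's computation of the automorphism group, and it also notes, as you do in your alternative, that the statement is a special case of Lemma~\ref{Sn}. The only difference is cosmetic: you package the same facts through $\operatorname{Aut}(B_5,f_{B_5})$ instead of exhibiting the automorphisms directly.
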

	\begin{proof} This statement is a partial case of  Lemma~\ref{Sn}, however,
		we provide here an independent direct proof which follows from the Belyi theory and is valid only for~$n=5$. 
		
		The permutation of the coordinates $(x_1:\ldots:x_5)$ is a holomorphic automorphism of the Bring curve $B_5$. Therefore, the group $S_5$ acts on $B_5$ by orientation preserving diffeomorphisms. It is straightforward to see that these diffeomorphisms preserve the Belyi function. Therefore elements of the group $S_5$ act on the Belyi pair, and hence, on  the dessin $\mathcal D$, which is ${\mathcal L}(\overline{{\mathcal M}_{0,5}^{\mathbb R}})$. 
	\end{proof}	
	
	\begin{corollary}
		The symmetric group $S_5$ acts   transitively  on the  dessin~${\mathcal D}$ and the  dessin   ${\mathcal D}$ is regular.  \end{corollary}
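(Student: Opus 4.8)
The plan is to show that $S_5$ is the full colour- and orientation-preserving automorphism group of ${\mathcal D}$ and that it already acts transitively on edges; \emph{regularity} is then immediate from the definition. First I would record that, by the preceding corollary, $S_5$ acts on ${\mathcal D}$ through the permutation of the homogeneous coordinates $(x_1:\ldots:x_5)$ of the Bring curve $B_5$. Each such permutation is a biholomorphic automorphism of $B_5$, so the action is orientation-preserving; and since $f_{B_5}$, hence the Belyi function $\beta=1-\frac1{f_{B_5}}$ of ${\mathcal D}$, is a symmetric function of $x_1,\ldots,x_5$, each $\sigma\in S_5$ fixes $\beta$ and therefore maps $\beta^{-1}(0)$, $\beta^{-1}(1)$ and $\beta^{-1}((0,1))$ to themselves. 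Thus the action consists of colour-preserving automorphisms of the dessin, and it is faithful by Lemma~\ref{Sn} (one can also see this directly: for generic $(a,b)$ the $120$ points $(x_{\pi(1)}:\ldots:x_{\pi(5)})$, $\pi\in S_5$, are pairwise distinct, so they form a single free $S_5$-orbit on $B_5$). Hence $S_5$ embeds in $\mathrm{Aut}^{+}({\mathcal D})$, the group of colour- and orientation-preserving automorphisms of ${\mathcal D}$.

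Next I would bound $\mathrm{Aut}^{+}({\mathcal D})$ from above. Through the equivalence of categories between dessins and Belyi pairs (\cite{ShabVoev}), $\mathrm{Aut}^{+}({\mathcal D})=\{\phi\in\mathrm{Aut}(B_5):\beta\circ\phi=\beta\}=\{\phi\in\mathrm{Aut}(B_5):f_{B_5}\circ\phi=f_{B_5}\}$ is the group of deck transformations of the covering $f_{B_5}\colon B_5\to\bP$. A holomorphic automorphism of $B_5$ over $\bP$ is determined by the image of one point lying over a regular value, so the deck group acts freely on each such fibre; therefore its order is at most $\deg f_{B_5}=120$. Equivalently, in the combinatorial picture $\mathrm{Aut}^{+}({\mathcal D})$ acts semiregularly on the $120$ edges of ${\mathcal D}$ because ${\mathcal D}$ is connected, cf.\ \cite{LZ}. (An alternative route to the inclusion $S_5\subseteq\mathrm{Aut}^{+}({\mathcal D})$, and in fact to equality, is to transport Zvonkin's computation $\mathrm{Aut}^{+}(I_4\cup I_4^*)=S_5$ through the dualization, recolouring and suppression of valency-2 vertices of Definition~\ref{Def:J}, none of which changes the automorphism group.)

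Combining the two bounds, $120=|S_5|\le|\mathrm{Aut}^{+}({\mathcal D})|\le120$, so $\mathrm{Aut}^{+}({\mathcal D})=S_5$ and this group acts freely on the set of $120$ edges of ${\mathcal D}$, hence simply transitively. Transitivity of $S_5$ on ${\mathcal D}$ is the first assertion; and since the transitive group $S_5$ preserves both colours and the orientation, ${\mathcal D}$ is regular by definition. I expect the only point requiring genuine care to be the verification that the $S_5$-action is colour-preserving and faithful on ${\mathcal D}$ (so that it truly lies inside $\mathrm{Aut}^{+}$); once that is settled the conclusion is forced by the numerical coincidence $|S_5|=120=\#\{\text{edges of }{\mathcal D}\}$ together with the semiregularity of $\mathrm{Aut}^{+}$ on edges.
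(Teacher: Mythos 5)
Your proposal is correct, but it proves the corollary by a genuinely different route than the paper does. The paper's own proof is a one-liner: $\mathcal D\cong\mathcal J$ by Lemma~\ref{l3}, and $\mathcal J$ is regular because it is obtained from $I_4\cup I_4^*$ by dualization and recolouring, operations that preserve the automorphism group, so regularity is inherited from Zvonkin's result that $\mathrm{Aut}(I_4\cup I_4^*)=S_5$. You instead work directly on the Bring curve: the coordinate permutations preserve the symmetric Belyi function $1-1/f_{B_5}$, act faithfully, and hence embed $S_5$ into the deck group of the degree-$120$ covering; since the colour- and orientation-preserving automorphism group of a connected dessin acts semiregularly on edges, the coincidence $|S_5|=120=\#\{\text{edges of }\mathcal D\}$ forces the action to be simply transitive. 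This is the same counting that underlies Zvonkin's Proposition~5.6, so you are in effect reproving that ingredient rather than citing it; the payoff is that your argument is self-contained on the algebraic side, identifies $S_5$ as the \emph{full} colour- and orientation-preserving automorphism group, and yields simple transitivity on edges, all of which are slightly stronger than the stated claim. Note, however, that your route is not independent of the paper's combinatorial work: you still need to know that $\mathcal D$ is the dessin of $(B_5,\,1-1/f_{B_5})$, which is exactly the content of Lemma~\ref{Main} and rests on Lemma~\ref{l3}. One small caution: invoking Lemma~\ref{Sn} for faithfulness conflates the geometric $S_5$-action on ${\mathcal L}(\oMf)$ with the coordinate-permutation action on $B_5$ before the two have been matched; your parenthetical direct argument (the $120$ permuted root tuples are pairwise distinct for generic $(a,b)$, so a generic fibre is a free orbit) is the right one to rely on.
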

	
	\begin{proof}
		Follows by Lemma   \ref{l3} from the fact that ${\mathcal J}$ is regular.
	\end{proof}
	
	\section*{Acknowledgments}
	
	%The authors would like to express their deep gratitude to Sergei M. Natanzon for paying their attention to the problems related to real moduli spaces of algebraic curves,  to George B. Shabat for his fruitful comments and discussions, to Alexander K. Zvonkin for motivating discussions and observations, and to Grigorii A. Guterman for the help with drawing of pictures.
	
	The authors would like to express their deep gratitude to \fbox{Sergey Natanzon},  George Shabat and Sasha Zvonkin for fruitful comments and motivating discussions, and to Grisha Guterman for the help with drawing pictures.

	The work of Natalia Amburg was funded by the Russian Science Foundation  
	(Grant No. 21-12-00400). The work of Elena Kreines was partially funded by the ISF Grant  No. 1092/22.

	N. Ya. Amburg:
	\\
	NRC "Kurchatov institute", Moscow,  123182,
	Russia
	\\
	Faculty of Mathematics, National Research University Higher School of  
	Economics,  Moscow,  119048, Russia
	\\
	Institute for Information Transmission Problems, Moscow, 127051, Russia
	\\
	e-mail: amburg@mccme.ru
	
	E. M. Kreines:
	\\
	Tel Aviv University, Tel Aviv,  6997801,
	Israel
	\\        
	Ben Gurion University of Negev, Beer-Sheva, 8410501, Israel
	\\
	e-mail: 	kreines@tauex.tau.ac.il
\end{document}